\newcommand{\al}{\alpha}
\newcommand{\be}{\beta}
\newcommand{\de}{\delta}
\newcommand{\eps}{\varepsilon}
\newcommand{\la}{\lambda}
\renewcommand{\phi}{\varphi}
\newcommand{\bN}{\mathbb{N}}
\newcommand{\bR}{\mathbb{R}}
\newcommand{\ph}{\varphi}
\newcommand{\nf}{\infty}
\newcommand{\Om}{\Omega}
\newcommand{\arcsinh}{\operatorname{arcsinh}}
\newcommand{\etaodd}{\eta^{\text{odd}}}
\newcommand{\etaeven}{\eta^{\text{even}}}
\newcommand{\firstres}{\de}
\newcommand{\medstrut}{\vphantom{\int_{0_0}^{1^1}}}
\newcommand{\eqdef}{\coloneqq}
\newtheorem{thm}{Theorem}[section]
\newtheorem{lem}[thm]{Lemma}
\newtheorem{prop}[thm]{Proposition}
\newtheorem{conj}[thm]{Conjecture}
\theoremstyle{definition}
\newtheorem{defn}[thm]{Definition}
\newtheorem{rem}[thm]{Remark}
\newtheorem{numtest}[thm]{Numerical test}
\numberwithin{equation}{section}
\newcommand{\myurl}[1]{\href{#1}{URL: #1}}
\begin{document}

\title{\vspace{-2cm}{\bf Eigenvalues of even very nice Toeplitz matrices can be unexpectedly erratic}}
\author{Mauricio~Barrera \thanks{Partially supported by CONACYT scholarship.}\ ,
Albrecht~B\"{o}ttcher,\\
Sergei M.~Grudsky \thanks{Partially supported by CONACYT grant 238630.}\ ,
Egor A.~Maximenko \thanks{Partially supported by IPN-SIP projects.}}

\maketitle

\begin{abstract}
It was shown in a series of recent publications
that the eigenvalues of $n\times n$ Toeplitz matrices
generated by  so-called simple-loop symbols
admit certain regular asymptotic expansions into negative powers of $n+1$.
On the other hand, recently two of the authors considered the pentadiagonal Toeplitz matrices
generated by the symbol $g(x)=(2\sin(x/2))^4$, which does not satisfy the simple-loop conditions,
and derived asymptotic expansions of a more complicated form.
We here use these results to show that
the eigenvalues of the pentadiagonal Toeplitz matrices do not admit
the expected regular asymptotic expansion.
This also delivers a counter-example to a conjecture
by Ekstr\"{o}m, Garoni, and Serra-Capizzano
and reveals that the simple-loop condition is essential
for the existence of the regular asymptotic expansion.

\medskip
\noindent
{\bf MSC 2010:} Primary 15B05, Secondary 15A18, 41A60, 65F15.

\smallskip
\noindent
{\bf Keywords:} Toeplitz matrix, eigenvalue, spectral asymptotics, asymptotic expansion.
\end{abstract}

\maketitle

\section{Main results}

This paper is on the eigenvalues of the $n \times n$ analog $T_n(g)$ of the
symmetric pentadiagonal Toeplitz matrix
\[T_6(g)=\left(\begin{array}{rrrrrr}
6 & -4 & 1 &  &  & \\
-4 & 6 & -4 & 1 &  & \\
1 & -4 & 6 & -4 & 1 & \\
 & 1 & -4 & 6 & -4 & 1\\
 &  & 1 & -4  & 6 & -4\\
 &  &  & 1 & -4 & 6
\end{array}\right).\]
These matrices are generated by the Fourier coefficients of the function
\begin{align}
g(x)
&= e^{-2ix}-4e^{-ix}+6-4e^{ix}+e^{2ix} \notag \\
&= (2-e^{-ix}-e^{ix})^2=(2-2\cos x)^2=\left(2\sin\frac{x}{2}\right)^4. \label{g2}
\end{align}
Previous results, and we will say more about them below, raise the expectation that, given any natural number $p$, the eigenvalues $\lambda_{n,1} < \cdots < \lambda_{n,n}$ of $T_n(g)$
admit an asymptotic expansion
\begin{equation}
\lambda_{n,j}=\sum_{k=0}^p \frac{f_k(\frac{j\pi}{n+1})}{(n+1)^k} +O\left(\frac{1}{(n+1)^{p+1}}\right)\;\:\mbox{as}\;\: n \to \infty\label{basic}
\end{equation}
with the error term being uniform in $1 \le j \le n$ and with continuous functions $f_0, \ldots, f_p:[0,\pi] \to {\bf R}$. The following theorem,
which is the main result of the present paper, shows that this is surprisingly false for $p=4$.

\begin{thm}\label{thm:no_regular_with_n_plus_one}
Let $g$ and $T_n(g)$ be as above. There do not exist continuous functions
$f_0,\ldots,f_4\colon[0,\pi]\to\bR$ and numbers $C>0$, $N\in\bN$
such that
\begin{equation}\label{eq:expansion_4_n_plus_one}
\left|\la_{n,j}-\sum_{k=0}^4 \frac{f_k\left(\frac{j\pi}{n+1}\right)}{(n+1)^k}\right| \le \frac{C}{(n+1)^5}
\end{equation}
for every $n\ge N$ and every $j \in \{1,\ldots,n\}$.
\end{thm}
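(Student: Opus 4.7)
The plan is to argue by contradiction, using the more refined (and more complicated) asymptotic expansion of $\la_{n,j}$ obtained in the prior work of two of the authors for this specific pentadiagonal $g$. The crucial regime will be that of the smallest eigenvalues (fixed $j$, $n\to\infty$), where the fourth-order zero of $g$ at the origin introduces phenomena absent in the simple-loop case.

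Suppose continuous $f_0,\ldots,f_4\colon[0,\pi]\to\bR$ and constants $C,N$ satisfy \eqref{eq:expansion_4_n_plus_one}. A first reduction, via the classical Szeg\H{o}-type first-order eigenvalue distribution, forces $f_0=g$ on $[0,\pi]$ (take limits in \eqref{eq:expansion_4_n_plus_one} along sequences $(n_k,j_k)$ with $j_k\pi/(n_k+1)\to\theta$ and use continuity of $f_0$). Since $g$ is smooth with $g(x)=x^4+O(x^6)$ near $0$, one has $(n+1)^4 f_0(j\pi/(n+1))=(j\pi)^4+O(1/(n+1)^2)$, so multiplying \eqref{eq:expansion_4_n_plus_one} by $(n+1)^4$ yields, uniformly in $j$,
\[
(n+1)^4 \la_{n,j}-(j\pi)^4 \;=\; \sum_{k=1}^{4} (n+1)^{4-k}\, f_k\!\left(\frac{j\pi}{n+1}\right)+O\!\left(\frac{1}{n+1}\right).
\]

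The prior work supplies, for each fixed $j\in\bN$, the existence and the explicit form of the limit $\lim_{n\to\infty}(n+1)^4\la_{n,j}=\mu_j$, where $\{\mu_j\}_{j\ge 1}$ is an explicit sequence stemming from the continuous biharmonic eigenvalue problem that $T_n(g)$ approximates (its elements are fourth powers of zeros of a transcendental equation). Taking $n\to\infty$ at fixed $j$ in the display, and using continuity of each $f_k$ together with the finiteness of the limit and the fact that the prefactors $(n+1)^{4-k}$ for $k=1,2,3$ tend to infinity, one extracts near-zero asymptotics $f_k(x)=b_k x^{4-k}+o(x^{4-k})$ as $x\to 0^+$ for $k=1,2,3$, for some constants $b_1,b_2,b_3$, and arrives at
\[
\mu_j \;=\; (j\pi)^4 + b_1(j\pi)^3 + b_2(j\pi)^2 + b_3(j\pi) + f_4(0), \qquad j\in\bN.
\]
Hence $\{\mu_j\}$ would have to coincide with a polynomial in $j$ of degree at most four with only four free coefficients. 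The explicit description supplied by the prior work rules this out: fitting $\mu_1,\mu_2,\mu_3,\mu_4$ pins the four constants down, after which some later $\mu_j$ will disagree with the polynomial prediction, yielding the required contradiction.

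I expect the main technical obstacle to be extracting the near-zero rates $f_k(x)=b_k x^{4-k}+o(x^{4-k})$ from only the continuity of $f_k$ and the convergence of the sum $\sum_k (n+1)^{4-k} f_k(j\pi/(n+1))$ as $n\to\infty$ for each fixed $j$. My plan is to apply \eqref{eq:expansion_4_n_plus_one} to several families of pairs $(n,j)$ producing matching or closely related values of $j\pi/(n+1)$, assemble a Vandermonde-type linear system in the unknowns $f_k(\theta)$, and analyse its solution as $\theta\to 0^+$, with the finer expansion from the prior work controlling the right-hand sides.
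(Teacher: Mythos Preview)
Your route is genuinely different from the paper's, and the Vandermonde idea in your last paragraph is the right one: the pairs $(jm-1,j)$ for $j=1,2,3,4,5$ all produce the common argument $\theta=\pi/m$, so the hypothesis gives, for each $m$, five linear equations in the four quantities $m^{4-k}f_k(\pi/m)$ ($k=1,\dots,4$); solving the first four and passing to the limit $m\to\infty$ does yield constants $b_1,b_2,b_3$ and the relation $\mu_j=(j\pi)^4+b_1(j\pi)^3+b_2(j\pi)^2+b_3(j\pi)+f_4(0)$ for every $j$. By contrast, the paper never extracts the near-zero behaviour of the $f_k$ from continuity alone. It first establishes the expansion for $j\ge 2\log(n+2)$ with \emph{explicit infinitely smooth} coefficients $d_k$ built from $g$ and $\eta(x)=2\arctan(1/f(x))$, invokes the uniqueness proposition on that dense set of quotients to force any continuous $f_k$ to agree with those smooth functions (after changing denominator), and then reads off $\sum_{k=0}^4 d_k(u_{n,j})/(n+2)^k=((j\pi+\eta(0))/(n+2))^4+O(j^4/(n+2)^5)$ directly from the Taylor expansions of $g$ and $\eta$. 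The contradiction then reduces to the single inequality $\al_1\ne\pi+\eta(0)=3\pi/2$, which follows from $\tanh(3\pi/4)<1$. So the paper trades your Vandermonde step for the inner expansion, and in exchange gets an immediately checkable final inequality.

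The one genuine gap in your plan is the last assertion, that ``some later $\mu_j$ will disagree with the polynomial prediction.'' This is not automatic and is more delicate than it looks: since $\al_j-(2j+1)\pi/2\to 0$, to leading order $\al_j^4\approx((2j+1)\pi/2)^4$, and $((2j+1)\pi/2)^4-(j\pi)^4$ \emph{is} a cubic in $j$, so the obstruction lives entirely in the tiny, alternating corrections $\al_j-(2j+1)\pi/2$. You can close it cleanly as follows. From the defining equation one gets $\al_j=(2j+1)\pi/2+\epsilon_j$ with $\epsilon_j\to 0$ (indeed exponentially), hence $\al_j^4=((2j+1)\pi/2)^4+O(j^3\epsilon_j)$. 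If $\al_j^4-(j\pi)^4=P(j)$ with $P$ cubic, then $P(j)-\bigl[((2j+1)\pi/2)^4-(j\pi)^4\bigr]$ is a cubic polynomial in $j$ that tends to $0$ as $j\to\infty$, hence vanishes identically; thus $\al_j=(2j+1)\pi/2$ for all $j$, contradicting $\al_1>3\pi/2$. With this addition your argument is complete and lands on exactly the same final inequality as the paper's.
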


Unfortunately, there is an unlovely complication. We call it the $n, n+1, n+2$ problem.
In~\eqref{basic} and~\eqref{eq:expansion_4_n_plus_one} we used the denominator $n+1$. This
denominator is very convenient when tackling simple-loop symbols. However, when dealing with
the symbol~\eqref{g2}, the denominator $n+2$ is naturally emerging.
See Remark~\ref{rem:n_plus_two_is_natural}. Therefore we decided to work mostly with $n+2$
in this paper. We will denote the coefficient functions by $f_k$ if the denominator is $n+1$ and by $d_k$
in case it is $n+2$. To avert any confusion, let us state the $n+2$ result we will prove.

\begin{thm}\label{Theo 1.2}
Let $g$ and $T_n(g)$ be as above and let $p \ge 0$ be an integer.

\medskip
{\rm (a)}
There exist continuous functions
$d_0,\ldots,d_p\colon[0,\pi]\to\bR$ and a number $D_p >0$
such that
\begin{equation}\label{12aaa}
\left|\lambda_{n,j}-\sum_{k=0}^p \frac{d_k(\frac{j\pi}{n+2})}{(n+2)^k}\right| \le \frac{D_p}{(n+2)^{p+1}}
\end{equation}
whenever $n\ge 1$ and $\frac{p}{2}\log(n+2) \le j \le n$. These functions $d_0, \ldots, d_p$ are uniquely determined.

\medskip
{\rm (b)}
There is a constant $C>0$ such that
\begin{equation}\label{12ccc}
\left|\la_{n,j}-\sum_{k=0}^3 \frac{d_k\left(\frac{j\pi}{n+2}\right)}{(n+2)^k}\right| \le \frac{C}{(n+2)^4}
\end{equation}
for all $n\ge 1$ and all $j \in \{1,\ldots,n\}$.

\medskip
{\rm (c)}
However, there do not exist numbers $C >0$ and $N\in\bN$ such that
\begin{equation}\label{12bbb}
\left|\la_{n,j}-\sum_{k=0}^4 \frac{d_k\left(\frac{j\pi}{n+2}\right)}{(n+2)^k}\right| \le \frac{C}{(n+2)^5}
\end{equation}
for all $n\ge N$ and all $j \in \{1,\ldots,n\}$.
\end{thm}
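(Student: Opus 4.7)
All three parts should follow from the refined, non-regular asymptotic formula for $\la_{n,j}$ that two of the authors established in their earlier paper on the pentadiagonal symbol $g(x)=(2\sin(x/2))^4$. My plan is to take that closed-form expression as a black box: it represents $\la_{n,j}$ as a real-analytic function of the natural grid point $\phi_j \eqdef \frac{j\pi}{n+2}$ together with auxiliary $\arcsinh$-type variables (the $\etaodd_j,\etaeven_j$ prepared in the preamble), modulo a remainder that decays faster than any power of $(n+2)^{-1}$ uniformly in $1\le j\le n$.

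For part~(a), the strategy is to Taylor-expand that closed form in powers of $(n+2)^{-1}$ around the limiting values of the auxiliary variables, collect the coefficients, and read off continuous functions $d_0,\ldots,d_p\colon[0,\pi]\to\bR$. The cut-off $\frac{p}{2}\log(n+2)\le j$ is exactly what one needs in order for the change-of-variable remainder (a quantity of size $e^{-2j}$) to be absorbed into $O((n+2)^{-p-1})$. Uniqueness of the $d_k$ is a standard density argument: if two systems $\{d_k\}$, $\{\widetilde d_k\}$ both satisfied~\eqref{12aaa}, then peeling off the lowest orders and using density of the grid $\{\phi_j : n\ge 1,\ 1\le j\le n\}$ in $[0,\pi]$ together with continuity forces $d_k=\widetilde d_k$ on $[0,\pi]$ order by order.

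Part~(b) is the heart of the theorem and I expect it to be the main obstacle. Part~(a) already yields~\eqref{12ccc} for $\frac{3}{2}\log(n+2)\le j\le n$, so everything reduces to the initial window $1\le j<\frac{3}{2}\log(n+2)$. There both $\la_{n,j}$ and the proposed sum $\sum_{k=0}^3 d_k(\phi_j)/(n+2)^k$ are of order $j^4/(n+2)^4 \le \log^4(n+2)/(n+2)^4$, while the difference between $\la_{n,j}$ and that truncated regular sum a~priori carries spurious terms of the shape $e^{-2j}\log(n+2)/(n+2)^k$, which are only polylogarithmically small. One must therefore show that through order $k=3$ the functions $d_0,\ldots,d_3$ are organised so that these logarithmic factors cancel identically. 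I would verify this by explicit matching at $\phi=0$ of the Taylor expansions of $d_0,\ldots,d_3$ against the series produced by the $\arcsinh$-variables, using the residues $\etaodd_j,\etaeven_j,\firstres$ recorded in the cited work.

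For part~(c), I would then specialize to $j=1$. On the one hand, inserting $j=1$ into the closed-form expression produces, at the $(n+2)^{-5}$ scale, a non-vanishing contribution involving $\log(n+2)$: this is the first uncancelled descendant of the $\arcsinh$-variable, and its appearance is precisely why the miraculous cancellation of part~(b) stops working one order higher. On the other hand, for any continuous $d_4$ the sum $\sum_{k=0}^4 d_k(\pi/(n+2))/(n+2)^k$ equals $\sum_{k=0}^4 d_k(0)/(n+2)^k + o((n+2)^{-4})$ and contains no $\log$-factor. Hence
\[
(n+2)^5\left|\la_{n,1} - \sum_{k=0}^4 \frac{d_k(\pi/(n+2))}{(n+2)^k}\right| \longrightarrow \infty,
\]
in direct contradiction with~\eqref{12bbb}. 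In short, the logarithm whose miraculous cancellation at order three secured part~(b) is exactly what reappears at order four to forbid any regular expansion.
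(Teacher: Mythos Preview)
Your outline for part~(a) matches the paper's argument: the bound $|\rho_{n,j}|\le 6e^{-2j}/(n+2)$ on the discrepancy between $\eta_{n,j}$ and the limiting $\eta$ becomes $O((n+2)^{-p-1})$ precisely when $j\ge\frac{p}{2}\log(n+2)$, after which the iterative substitution into the main equation yields the $d_k$, and uniqueness is the density argument you describe. Part~(b) is also in the right direction, though simpler than you fear: for $j<2\log(n+2)$ one checks that \emph{both} $\la_{n,j}$ and the truncated sum $\sum_{k=0}^{3}d_k(u_{n,j})/(n+2)^k$ equal $\bigl((j\pi+\eta(0))/(n+2)\bigr)^4+O((n+2)^{-4})$, so their difference is automatically $O((n+2)^{-4})$. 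There is no ``miraculous cancellation of logarithms''; only the elementary observation that quantities like $j^3e^{-2j}$ and $j^4/(n+2)$ stay bounded on that range of $j$.

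Your argument for part~(c), however, rests on a mechanism that is not there. For each fixed $j$ the eigenvalue $\la_{n,j}$ has a \emph{pure power} expansion in $1/(n+2)$ with no logarithm: passing to the limit in the exact equation one finds $(n+2)\ph_{n,j}\to\al_j$, where $\al_j\in(j\pi,(j+1)\pi)$ is the unique root of $\tanh(\al/2)=(-1)^j\tan(\al/2)$, and then
\[
\la_{n,1}=\Bigl(\frac{\al_1}{n+2}\Bigr)^{4}+O\!\left(\frac{1}{(n+2)^{6}}\right),\qquad \al_1\approx 4.73004.
\]
So your search for a $\log(n+2)$ contribution at order $(n+2)^{-5}$ would come up empty. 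The actual obstruction sits one order \emph{lower}: the regular five-term sum satisfies $\sum_{k=0}^{4}d_k(u_{n,1})/(n+2)^k=\bigl((\pi+\eta(0))/(n+2)\bigr)^4+O((n+2)^{-5})$ with $\eta(0)=2\arctan 1=\pi/2$, hence predicts coefficient $(3\pi/2)^4$ in front of $(n+2)^{-4}$, whereas the true coefficient is $\al_1^4$. Since $\al_1\ne 3\pi/2$ (they differ by about $1.8\cdot10^{-2}$), one gets $(n+2)^{4}\bigl|\la_{n,1}-\sum_{k=0}^{4}d_k(u_{n,1})/(n+2)^k\bigr|\to\al_1^4-(3\pi/2)^4\ne 0$, contradicting~\eqref{12bbb}. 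Note also that this step requires first invoking the uniqueness from part~(a) to force any candidate $d_0,\ldots,d_4$ in~\eqref{12bbb} to coincide with the ones already constructed; you state uniqueness in~(a) but do not explicitly use it in~(c).
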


In the final section of the paper we will pass from $n+2$ to $n+1$ and prove Theorem~\ref{thm:no_regular_with_n_plus_one}.

Part (b) of Theorem~\ref{Theo 1.2} might suggest that
all eigenvalues $\la_{n,j}$ are \emph{moderately well}
approximated by the sums $\sum_{k=0}^3 d_k(\frac{j\pi}{n+2})/(n+2)^k$.
In fact, as we will show in Remark~\ref{rem:approximation_is_bad},
this approximation is \emph{extremely bad} for the first eigenvalues,
in the sense that the corresponding relative errors do not converge to zero.
However, as Theorem~\ref{Theo 1.2}(a) shows,
asymptotic expansions of the form \eqref{basic} for $p=2,3,4,\ldots$
can be used outside a small neighborhood of the point
at which the symbol has a zero of order greater than $2$.

It is well known that $\la_{n,j}=g(j\pi/n)+O(1/n)$, uniformly in $j$,
implying that~\eqref{basic} and~\eqref{12aaa} hold for $p=0$ with $f_0=d_0=g$.
Figure~\ref{fig:plotg1} shows the plot of the symbol $g$ (from $0$ to $\pi$)
and the eigenvalues of $T_{64}(g)$ as the points $(j\pi/65,\la_{64,j})$ and $(j\pi/66,\la_{64,j})$
with $n+1=65$ and $n+2=66$, respectively.
Notice that the approximation of $\la_{n,j}$ by $g(j\pi/(n+2))$ is not very good for large values of $j$.
It is seen that the approximation of $\la_{n,j}$ by $g(j\pi/(n+1))$ is better.

\begin{figure}[htb]
\centering
\includegraphics{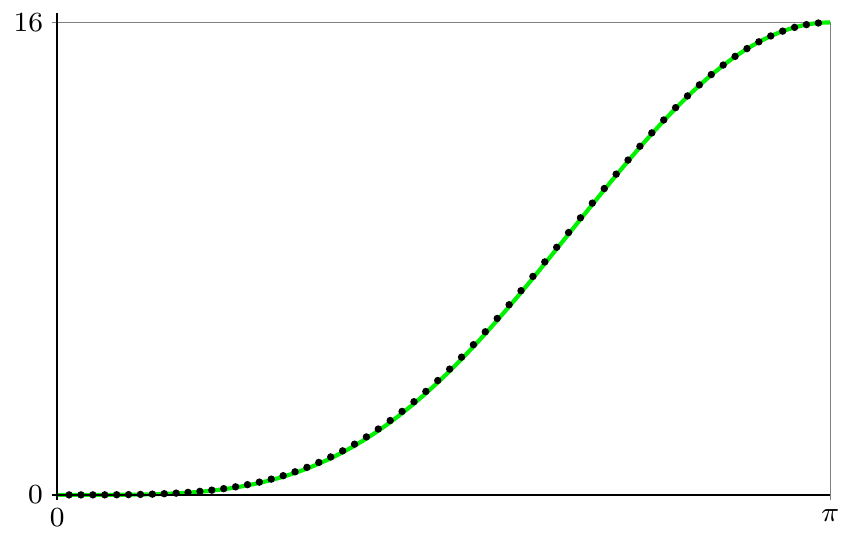}

\vspace{5mm}
\includegraphics{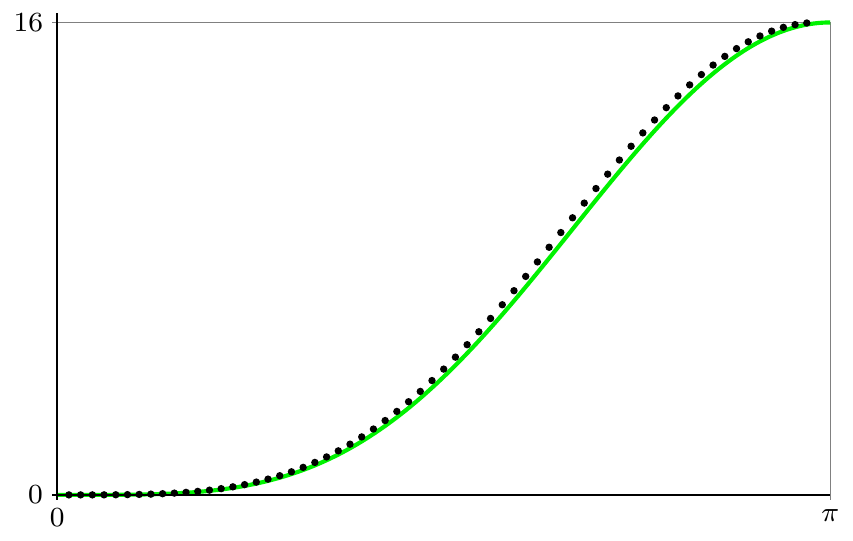}
\caption{Above is the plot of $g$ and the points $(j\pi/65,\la_{64,j})$ for $1\le j\le 64$.
Below we see the plot of $g$ and the points $(j\pi/66,\la_{64,j})$ for $1\le j\le 64$.
\label{fig:plotg1}}
\end{figure}

We will compute the functions $d_1, \ldots, d_4$ of Theorem~\ref{Theo 1.2}.
Knowledge of these functions allows us to illustrate the higher order asymptotics of the eigenvalues
and to depict the expected behavior for $p=0,1,2,3$ and the erratic behavior for $p=4$.
Put
\[
\Omega_{p+1,n,j} \eqdef (n+2)^{p+1}\left(\la_{n,j}-\sum_{k=0}^{p} \frac{d_k(\frac{j\pi}{n+2})}{(n+2)^k}\right).
\]
In Figure~\ref{fig:errors}, we see a perfect matching between $\Om_{p,64,j}$ and $d_p(j\pi/66)$
for $p=1,2,3,4$, except for $p=4$ and $j=1,2$.
The gap between $d_4(\pi/66)$ and $\Om_{4,64,1}$
shows that the asymptotics of $\la_{n,1}$ does not obey the regular rule
with the functions $d_0,d_1,d_2,d_3,d_4$.

\begin{figure}[htb]
\centering
\parbox[t]{0.477\textwidth}{\centering\includegraphics{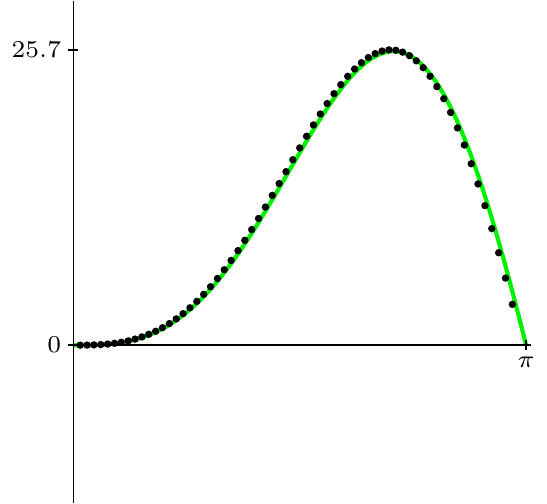}\par (a) $d_1$ and $\Om_{1,64,j}$}
\quad
\parbox[t]{0.477\textwidth}{\centering\includegraphics{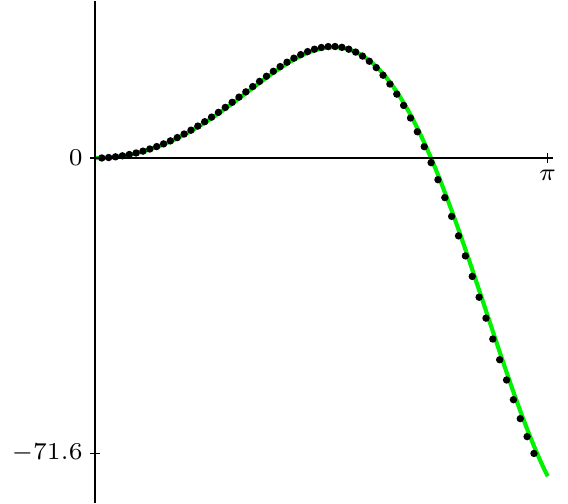}\par (b) $d_2$ and $\Om_{2,64,j}$}
\par\vspace{0.5cm}
\parbox[t]{0.477\textwidth}{\centering\includegraphics{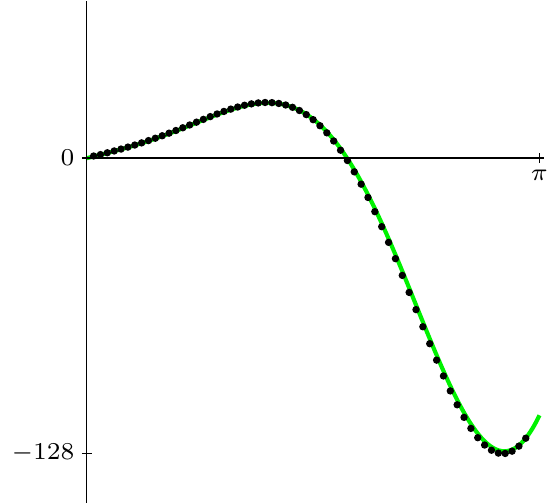}\par (c) $d_3$ and $\Om_{3,64,j}$}
\quad
\parbox[t]{0.477\textwidth}{\centering\includegraphics{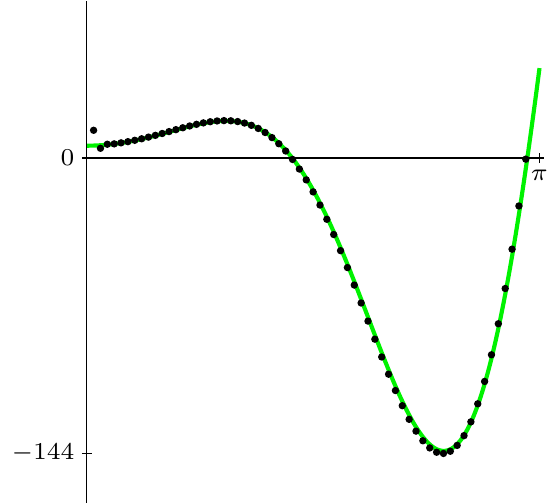}\par (d) $d_4$ and $\Om_{4,64,j}$}
\caption{In subplot (a), we see the graph of $d_1$ and the values of $\Om_{1,64,j}$,
shown as the points $(j\pi/66,\Om_{1,64,j})$.
On subplot (b), we see $d_2$ and $\Om_{2,64,j}$, etc.}
\label{fig:errors}
\end{figure}

Of course, the erratic behavior of the first two eigenvalues in subplot~(d)
of Figure~\ref{fig:errors} might be caused by the circumstance that $n=64$ is not yet large enough.
Figure~\ref{fig:Omega4_1024} reveals that this behavior persists when passing to larger $n$.
In that figure we see the first piece of the graph of $d_4$ and the points $(j\pi/(n+2), \Omega_{4,n,j})$
for $1 \le j \le 64$ and $n=1024$. Now the first three eigenvalues show distinct irregularity.

\begin{figure}[htb]
\centering
\includegraphics{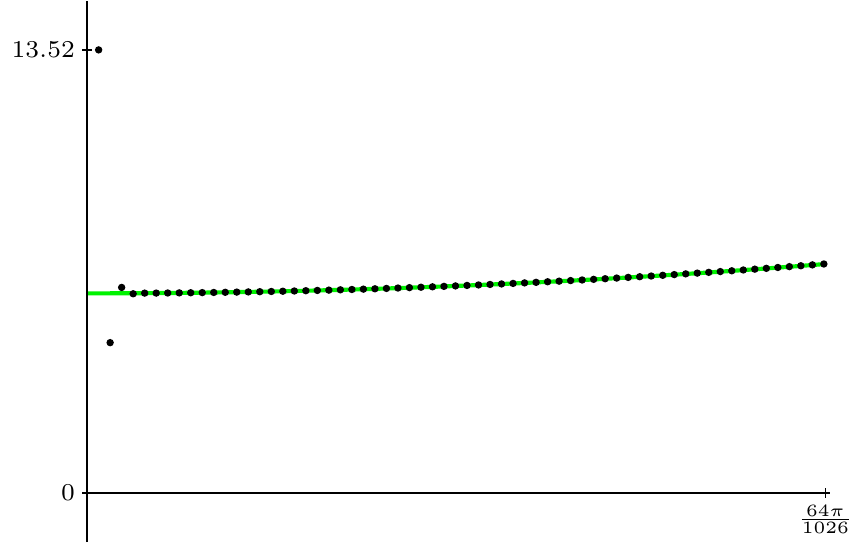}
\caption{The graph of $d_4$ and the points $(j\pi/1026,\Om_{4,1024,j})$ for $j=1,\ldots,64$.\label{fig:Omega4_1024}}
\end{figure}

\medskip
\noindent
Figures~\ref{fig:Omega5_64} and \ref{fig:Omega5_1024} show what happens for $p=5$
and for the matrix dimensions $n=64$ and $n=1024$.

\begin{figure}[htb]
\centering
\includegraphics{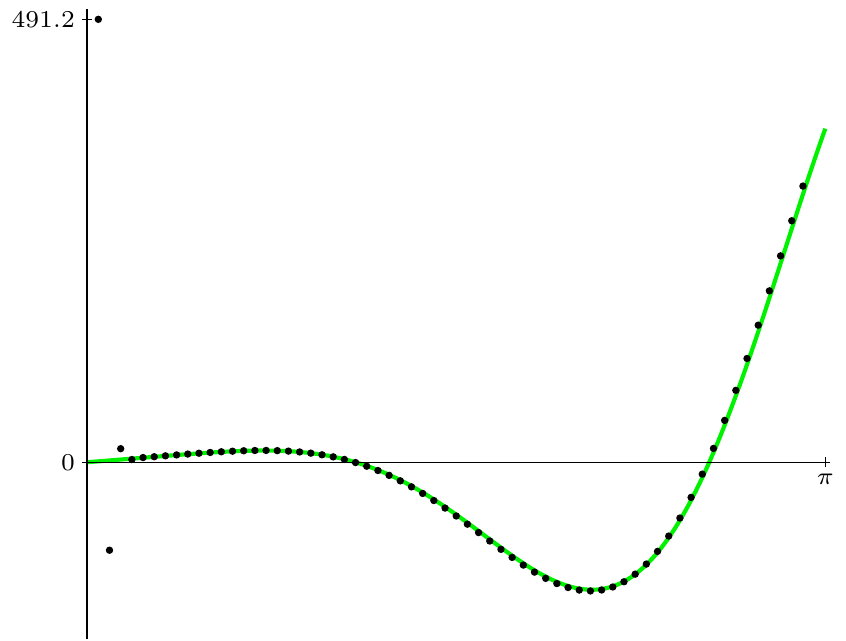}
\caption{The graph of $d_5$ and the points $(j\pi/66,\Om_{5,64,j})$.\label{fig:Omega5_64}}
\end{figure}

\begin{figure}[htb]
\centering
\includegraphics[height=9cm]{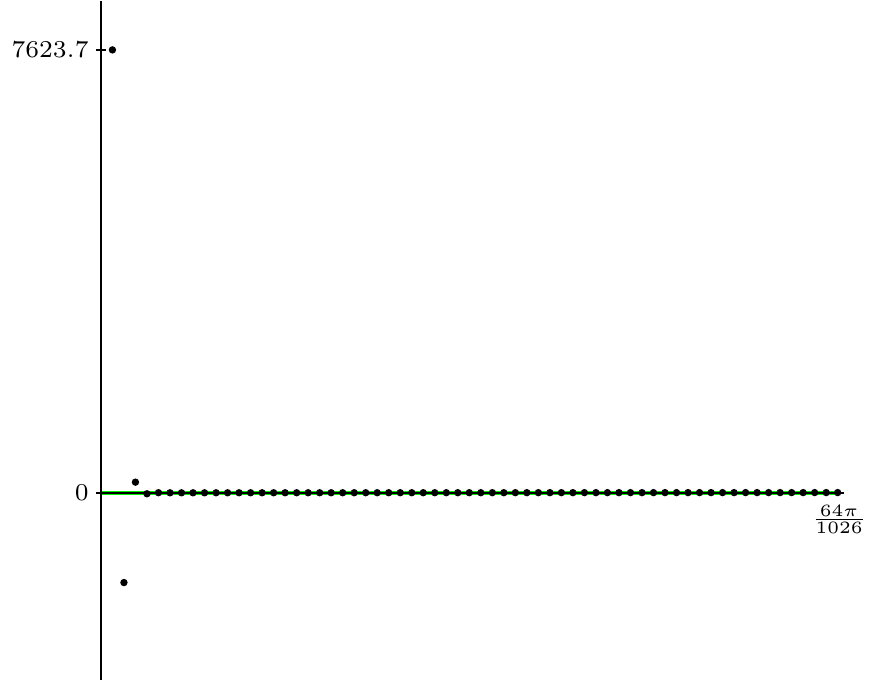}
\caption{The first piece of the graph of $d_5$ (green) and
the points $(j\pi/1026,\Om_{5,1024,j})$ for $j=1,\ldots,64$.
The plot of $d_5$ cannot be distinguished from the abscissa axis.\label{fig:Omega5_1024}}
\end{figure}

\section{Prehistory}

It was the previous papers \cite{BGM2010inside,DIK2012,BBGM2015simpleloop,BGM2017relaxed}
that were devoted to regular asymptotic expansions for the eigenvalues of Toeplitz matrices with so-called simple-loop symbols.
We recall that, in a more general context, the starting point is a $2\pi$-periodic bounded function $g : \bR \to \bR$
with Fourier series $g(x)  \sim \sum_{k=-\infty}^\infty \hat{g}_k e^{ikx}$. The $n \times n$
Toeplitz matrix generated by $g$ is the matrix $T_n(g) =(\hat{g}_{j-k})_{j,k=1}^n$. The function $g$ is referred to as the symbol of the matrix sequence
$\{T_n(g)\}_{n=1}^\infty$.  Examples of simple-loop symbols are even $2\pi$-periodic $C^\infty$ functions $g : \bR \to \bR$
satisfying $g'(x)>0$ for every $x$ in $(0,\pi)$, $g'(0)=0$, $g''(0)>0$, $g'(\pi)=0$, $g''(\pi)<0$.
The requirement that $g$ be a real-valued and even function implies that the matrices $T_n(g)$ are real and symmetric.

In the beginning of Section~7 of \cite{BBGM2015simpleloop},
we also noted
that the mere existence of such regular asymptotic expansions
already helps to approximate the eigenvalues of large matrices
by using the eigenvalues of small matrices and some sort of extrapolation.

Ekstr\"{o}m, Garoni, and Serra-Capizzano \cite{EGS2017areknown}
worked out the idea of such extrapolation in detail. They also emphasized
that the symbols of interest in connection with the discretization of differential equations
are of the form
\begin{equation}
g_m(x)=(2-2\cos x)^m=\left(2\sin\frac{x}{2}\right)^{2m}.\label{g_m}
\end{equation}
In the simplest case $m=1$, the matrices $T_n(g_1)$ are the $n \times n$ analogs
of the tridiagonal Toeplitz matrix
\[T_4(g_1)=\left(\begin{array}{rrrr}
2 & -1 &  & \\
-1 & 2 & -1 &\\
& -1 & 2 & -1\\
& & -1 & 2 \end{array}\right).\]
The eigenvalues of these matrices are known exactly,
\[\la_{n,j}=2-2\cos\frac{j\pi}{n+1}=\left(2\sin\frac{j\pi}{2n+2}\right)^2,\]
and hence they obey the regular asymptotics~\eqref{basic} with $f_0=g$ and $f_k=0$
for $k \ge 1$.
A crucial observation of~\cite{EGS2017areknown}
is that the symbols $g_m$ are no longer simple-loop symbols for $m \ge 2$ , because then the
second derivative at $0$ vanishes.
Our concrete symbol~\eqref{g2} is just $g_2$ and hence not a simple-loop symbol.
Ekstr\"{o}m, Garoni, and Serra-Capizzano
nevertheless conjectured that the regular asymptotic expansions stay true
for smooth even real-valued symbols that are monotone on $[0,\pi]$
and that may have a minimum or a maximum of higher-order.
They verified this conjecture numerically for some examples
and for small values of $p$.
This conjecture has attracted a lot of attention.

Independently and at the same time, two of us~\cite{BG2017pentadiagonal}
considered just the symbol~\eqref{g2} and derived
exact equations and asymptotic expansions for the eigenvalues of $T_n(g)$.
Later, when paper~\cite{EGS2017areknown} came to our attention,
we realized to our surprise that the results of~\cite{BG2017pentadiagonal}
imply that for $g(x)=(2\sin(x/2))^4$  the eigenvalues do not admit
a regular asymptotic expansion of the form~\eqref{basic} with $p=4$.
This is what Theorem~\ref{thm:no_regular_with_n_plus_one} says
and this is a counter-example to the conjecture by Ekstr\"{o}m, Garoni, and Serra-Capizzano.

The rest of the paper is organized as follows.
In Sections~\ref{sec:regular_expansions} and \ref{sec:uniqueness}
we provide some general facts about regular asymptotic expansions.
In Section~\ref{sec:inner_eigenvalues},
using formulas and ideas from \cite{BG2017pentadiagonal},
we show that an analog of \eqref{eq:expansion_4_n_plus_one}
is true for the eigenvalues that are not too close to the minimum of the symbol,
namely, for $2\log(n+2)\le j\le n$,
and provide recipes to compute the corresponding coefficients.
On the other hand, in Section~\ref{sec:first_eigenvalues}
we deduce an asymptotic formula for the first eigenvalue.
In Section~\ref{sec:contradiction}
we prove that the asymptotics from Sections~\ref{sec:inner_eigenvalues}
and \ref{sec:first_eigenvalues} cannot be joined.

\section{Regular expansions of the eigenvalues}
\label{sec:regular_expansions}

In this and the following sections,
we work in abstract settings and use the denominator $n+s$,
where $s$ is an arbitrary positive constant (``shift'').
This allows us to unify the situations with $n+1$ and $n+2$
and to simplify the subsequent references in the last sections of the paper.

We first introduce some notation and recall some facts.
Given a $2\pi$-periodic bounded real-valued function $g$ on the real line,
we denote by $\la_{n,1},\ldots,\la_{n,n}$ the eigenvalues
of the corresponding Toeplitz matrices $T_n(g)$,
ordered in the ascending order: $\la_{n,1}\le\dots\le\la_{n,n}$.
Using the first Szeg\H{o} limit theorem and criteria for weak convergence of probability measures,
we proved in \cite{BBM2016convergence,BBGM2015maximum}
that if the essential range of $g$ is a segment of the real line,
then $\la_{n,j}$ can be uniformly approximated by the values of the quantile function $Q$ (associated to $g$)
at the points $j/(n+s)$:
\begin{equation}\label{eq:quantile_approx}
\max_{1\le j\le n} \left|\la_{n,j}-Q\left(\frac{j}{n+s}\right)\right|= o(1)\quad\text{as}\quad n\to\infty.
\end{equation}
If $g$ is continuous, even, and strictly increasing on $[0,\pi]$, then $Q(x)$ is just $g(\pi x)$.
Denote by $u_{n,j}$ the points of the uniform mesh $j\pi/(n+s)$, $j\in\{1,\ldots,n\}$.
Then \eqref{eq:quantile_approx} can be rewritten in the form
\begin{equation}\label{eq:g_approx}
\max_{1\le j\le n} \left|\la_{n,j}-g(u_{n,j})\right| = o(1)\quad\text{as}\quad n\to\infty.
\end{equation}
Trench proved \cite{Trench1993interlacement}
that for this class of symbols the eigenvalues are all distinct:
\[
g(0)<\la_{n,1}<\dots<\la_{n,n}<g(\pi).
\]
Thus, there exist real numbers $\ph_{n,1},\ldots,\ph_{n,n}$ such that
\[0<\ph_{n,1}<\ldots<\ph_{n,n}<\pi\] and $\la_{n,j}=g(\ph_{n,j})$.
Taking into account \eqref{eq:g_approx},
we can try to use $u_{n,j}$ as an initial approximation for $\ph_{n,j}$.
This approximation can be very inaccurate, but it is better than nothing.

Now let $J$ be an arbitrary set of integer pairs $(n,j)$ such that $1\le j\le n$ for every $(n,j)$ in $J$.
Suppose that for each $(n,j)$ in $J$ the number $\ph_{n,j}$ is the unique solution of an equation
\begin{equation}\label{eq:main_equation}
x = u_{n,j}+\frac{\eta(x)}{n+s}+\rho_{n,j}(x),
\end{equation}
where $\eta$ is an infinitely smooth real-valued function on $[0,\pi]$
and $\{\rho_{n,j}\}_{(n,j)\in J}$
is a family of infinitely smooth real-valued function on $[0,\pi]$ such that
\begin{equation}\label{eq:rho_is_small}
\sup_{0\le x\le \pi}\,\sup_{j: (n,j)\in J} |\rho_{n,j}(x)| = O\left(\frac{1}{(n+s)^p}\right)
\end{equation}
for some  $p$ in $\bN$.

In the simple-loop case, the function $\rho_n$ did not depend on $j$,
and $J$ was of the form $\{(n,j)\colon n\ge N,\ 1\le j\le n\}$ for some $N$.

Let us show how to derive asymptotic expansions of $\ph_{n,j}$ and $\la_{n,j}$
from equation~\eqref{eq:main_equation}.

\begin{prop}\label{prop:regular_asymptotics_from_main_equation}
Let $\eta$ be an infinitely smooth real-valued function on $[0,\pi]$,
and $\{\rho_{n,j}\}_{(n,j)\in J}$ be a family of real-valued functions on $[0,\pi]$
satisfying \eqref{eq:rho_is_small} for
some natural number $p$.
Suppose that for all $(n,j)$ in $J$
equation~\eqref{eq:main_equation} has a unique solution $\ph_{n,j}$.
Then there exists a sequence of real-valued infinitely smooth functions
$c_0,c_1,c_2,\ldots$ defined on $[0,\pi]$ such that
there is a number $r_p>0$ ensuring that, for all $(n,j)$ in $J$,
\begin{equation}\label{eq:regular_asympt_ph}
\left|\ph_{n,j} - \sum_{k=0}^p \frac{c_k(u_{n,j})}{(n+s)^k}\right|\le\frac{r_p}{(n+s)^{p+1}}.
\end{equation}
Furthermore, if $g$ is an infinitely smooth $2\pi$-periodic real-valued even function on $\bR$,
strictly increasing on $[0,\pi]$,
then there exists a sequence of real-valued infinitely smooth functions
$d_0,d_1,c_2,\ldots$ defined on $[0,\pi]$ such that
the numbers $\la_{n,j}\eqdef g(\ph_{n,j})$ can be approximated as follows:
there exists an $R_p$ such that, for all $(n,j)$ in $J$,
\begin{equation}\label{eq:regular_asympt_la}
\left|\la_{n,j} - \sum_{k=0}^p \frac{d_k(u_{n,j})}{(n+s)^k}\right|\le\frac{R_p}{(n+s)^{p+1}}.
\end{equation}
\end{prop}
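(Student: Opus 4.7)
The plan is to construct the coefficients $c_k$ by the method of undetermined coefficients applied to a formal series ansatz in powers of $1/(n+s)$, compare the resulting partial sum to the true solution $\ph_{n,j}$ via a mean-value-theorem argument applied to $\eta$, and then transfer the expansion from $\ph_{n,j}$ to $\la_{n,j}=g(\ph_{n,j})$ by a Taylor expansion of $g$.

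First I would look for a formal series $\ph\sim\sum_{k\ge 0} c_k(u)/(n+s)^k$ satisfying $x=u+\eta(x)/(n+s)$ modulo arbitrarily high powers of $1/(n+s)$. Substituting, Taylor-expanding $\eta$ at the base point $u$, and matching coefficients of each power of $1/(n+s)$ forces $c_0(u)=u$, $c_1(u)=\eta(u)$, $c_2(u)=\eta(u)\eta'(u)$, and, inductively, $c_k$ is a universal polynomial in $\eta(u),\eta'(u),\ldots,\eta^{(k-1)}(u)$ and the lower $c_j$'s; since $\eta\in C^\infty([0,\pi])$, an easy induction gives $c_k\in C^\infty([0,\pi])$ for every $k\ge 0$. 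Setting $S_p(u,n)\eqdef\sum_{k=0}^p c_k(u)/(n+s)^k$ and invoking Taylor's theorem with remainder for $\eta$ then yields the uniform bound $|S_p-u-\eta(S_p)/(n+s)|\le M_p/(n+s)^{p+1}$ on $[0,\pi]$, with $M_p$ depending only on $\eta$ and $p$.

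Next I would subtract this residual identity from \eqref{eq:main_equation} and apply the mean value theorem to $\eta(\ph_{n,j})-\eta(S_p(u_{n,j},n))$, obtaining
\[
(\ph_{n,j}-S_p(u_{n,j},n))\bigl(1-\eta'(\xi)/(n+s)\bigr) = \rho_{n,j}(\ph_{n,j}) - E_p(u_{n,j},n)
\]
for some intermediate $\xi$. Once $n$ is large enough that $\|\eta'\|_\infty/(n+s)\le 1/2$, the factor on the left is bounded below by $1/2$, so the hypothesis~\eqref{eq:rho_is_small} combined with the formal residual bound yields~\eqref{eq:regular_asympt_ph}; finitely many small $n$ are absorbed into $r_p$. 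For the eigenvalue expansion I would then write $\la_{n,j}=g(S_p(u_{n,j},n))+g'(\xi')(\ph_{n,j}-S_p(u_{n,j},n))$: the second term is $O(1/(n+s)^{p+1})$ by the previous step and the boundedness of $g'$, while the first is Taylor-expanded about $u_{n,j}$ in the small parameter $1/(n+s)$ and regrouped by powers, identifying $d_k(u)$ as the universal polynomial in $g^{(m)}(u)$ and $c_1(u),\ldots,c_k(u)$ appearing at order $1/(n+s)^k$ (with $d_0=g$); the associated Taylor remainder is again $O(1/(n+s)^{p+1})$, which gives~\eqref{eq:regular_asympt_la}.

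The main obstacle, mild but delicate, is enforcing uniformity of all $O$-constants in $u\in[0,\pi]$ and in $(n,j)\in J$. This uniformity follows from the $C^\infty$-smoothness of $\eta$ and $g$ on the compact interval $[0,\pi]$, which gives uniform bounds on all derivatives and on all Taylor remainders, together with the uniform-in-$j$ sup bound built into~\eqref{eq:rho_is_small}. A secondary bookkeeping point is that the inductively defined $c_k$ and $d_k$ remain smooth up to the endpoints $0$ and $\pi$; this again follows from $\eta,g\in C^\infty([0,\pi])$ since each coefficient is a finite polynomial expression in their derivatives.
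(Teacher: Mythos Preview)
Your approach is correct and reaches the same conclusion as the paper, but by a genuinely different route. The paper builds the expansion of $\ph_{n,j}$ \emph{iteratively}: starting from $\ph_{n,j}=u_{n,j}+O(1/(n+s))$, it substitutes this back into \eqref{eq:main_equation}, Taylor-expands $\eta$ about $u_{n,j}$, and repeats, gaining one order per pass (the authors call this the ``M\"unchhausen trick''); the coefficients $c_k$ emerge from the iteration rather than being posited in advance. You instead separate the algebra from the analysis: first determine all $c_k$ formally, assemble the partial sum $S_p$, verify that $S_p$ satisfies \eqref{eq:main_equation} up to $O((n+s)^{-(p+1)})$, and then compare the true solution $\ph_{n,j}$ to the approximate solution $S_p$ by a single mean-value step that makes the contraction factor $1-\eta'(\xi)/(n+s)$ explicit. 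The paper's iteration is shorter to write down; your version cleanly isolates the one place where the bound on $\rho$ enters and would adapt more easily if one wanted to track constants. For $\la_{n,j}$ both proofs do essentially the same thing, Taylor-expanding $g$ and regrouping. One minor caveat you share with the paper's own argument: hypothesis~\eqref{eq:rho_is_small} gives only $\rho_{n,j}=O((n+s)^{-p})$, so the right-hand side $\rho_{n,j}(\ph_{n,j})-E_p$ of your subtracted identity is a priori $O((n+s)^{-p})$ rather than $O((n+s)^{-(p+1)})$; in the paper's applications $\rho$ decays faster than any power and this off-by-one is invisible, but as literally stated the bound $r_p/(n+s)^{p+1}$ really requires $\rho=O((n+s)^{-(p+1)})$.
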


\begin{proof}
This proposition was essentially proved in \cite{BBGM2015simpleloop,BGM2017relaxed},
with a slightly different notation and reasoning,
including a justification of the fixed-point method.
Here we propose a simpler proof.
Our goal is to show that \eqref{eq:regular_asympt_ph} and \eqref{eq:regular_asympt_la}
are direct and trivial consequences of the main equation \eqref{eq:main_equation}.

In order to simplify notation,
we denote by $O(1/(n+s)^p)$ any expression that may depend on $n$ and $j$
but can be estimated from above by ${C}/{(n+s)^p}$ with $C$ independent of $n$ or $j$.
Then \eqref{eq:main_equation} implies that
\[
\ph_{n,j}=u_{n,j}+O\left(\frac{1}{n+s}\right).
\]
Substitute this expression into \eqref{eq:main_equation}
and expand $\eta$ by Taylor's formula around the point $u_{n,j}$:
\begin{align*}
\ph_{n,j}
&=u_{n,j}+\frac{\eta\left(u_{n,j}+O\left(\frac{1}{n+s}\right)\right)}{n+s}
+O\left(\frac{1}{(n+s)^2}\right)
\\
&=u_{n,j}+\frac{\eta(u_{n,j})}{n+s}+O\left(\frac{1}{(n+s)^2}\right).
\end{align*}
Substituting the last expression into \eqref{eq:main_equation}
and expanding $\eta$ by Taylor formula around $u_{n,j}$ we get
\begin{align*}
\ph_{n,j}
&=u_{n,j}+\frac{\eta\left(u_{n,j}+\frac{\eta(u_{n,j})}{n+s}+O\left(\frac{1}{(n+s)^2}\right)\right)}{n+s}
+O\left(\frac{1}{(n+s)^3}\right)
\\
&=u_{n,j}+\frac{\eta(u_{n,j})}{n+s}
+\frac{\eta(u_{n,j})\eta'(u_{n,j})}{(n+s)^2}+O\left(\frac{1}{(n+s)^3}\right).
\end{align*}
This ``M\"{u}nchhausen trick'' can be applied again and again
(we refer to the story when Baron von M\"{u}nchhausen saved himself
from being drowned in a swamp by pulling on his own hair),
yielding an asymptotic expansion of the form \eqref{eq:regular_asympt_ph} of any desired order $p$.

The first of the functions $c_k$  are
\begin{equation}\label{eq:c_formulas}
\begin{aligned}
&c_0(x)=x, \quad c_1(x)=\eta(x),\quad c_2(x)=\eta(x)\eta'(x), \\
&c_3=\eta (\eta')^2 + \frac{1}{2} \eta^2 \eta'',\quad
c_4=\eta (\eta')^3+\frac{3}{2} \eta^2 \eta' \eta'' +\frac{1}{6}\eta^3\eta'''.\\
&c_5= \eta (\eta')^4 + 3\eta^2 (\eta'^2) \eta''
+\frac{1}{2} \eta^3 (\eta'')^2
+\frac{2}{3} \eta^3 \eta' \eta''' + \frac{1}{24} \eta^4 \eta^{(4)}.
\end{aligned}
\end{equation}
By induction on $p$ it is straightforward to show
that $c_k$ is a uniquely determined polynomial in $ \eta, \eta', \ldots,
\eta^{(k-1)}$ also for $k \ge 6$.

Once we have the asymptotic formulas for $\ph_{n,j}$,
we can use the formula $\la_{n,j}=g(\ph_{n,j})$
and expand the function $g$ by Taylor's formula around the point $u_{n,j}$
to get
\begin{align*}
\la_{n,j}
&=g\left(u_{n,j}+\sum_{k=1}^p \frac{c_k(u_{n,j})}{(n+s)^k}
+ O\left(\frac{1}{(n+s)^{p+1}}\right)\right)
\\
&=\sum_{m=0}^p \frac{g^{(m)}(u_{n,j})}{m!}
\left(\sum_{k=1}^p \frac{c_k(u_{n,j})}{(n+s)^k} + O\left(\frac{1}{(n+s)^{p+1}}\right)\right)^m
\!\!\!\!\!+O(\ph_{n,j}-u_{n,j})^{p+1}.
\end{align*}
Expanding the powers, regrouping the summands,
and writing $\ph_{n,j}-u_{n,j}$ as $O(1/(n+s))$,
we obtain a regular asymptotic formula for $\la_{n,j}$:
\begin{equation}\label{eq:lambda_expansion_with_O}
\la_{n,j}=\sum_{k=0}^p \frac{d_k(u_{n,j})}{(n+s)^k} + O\left(\frac{1}{(n+s)^{p+1}}\right).
\end{equation}
The first of the functions $d_0,d_1,d_2,\ldots$ can be computed by the formulas
\begin{equation}\label{eq:d_formulas}
\begin{aligned}
&d_0 = g,\quad
d_1 = g' c_1,\quad
d_2 = g' c_2+\frac{1}{2} g'' c_1^2,\quad
d_3 = g' c_3 + g'' c_1 c_2 + \frac{1}{6} g''' c_1^3,
\\
&d_4 = g' c_4 + g''\left(c_1 c_3 + \frac{1}{2} c_2^2\right)
+ \frac{1}{2} g''' c_1^2 c_2 + \frac{1}{24} g^{(4)} c_1^4,
\\
&d_5 = g' c_5 + g'' (c_2 c_3 + c_1 c_4) + \frac{1}{2} g''' (c_1^2 c_3 + c_1 c_2^2)\\
&\qquad\quad+ \frac{1}{6} g^{(4)} c_1^3 c_2 + \frac{1}{120} g^{(5)} c_1^5.
\end{aligned}
\end{equation}
It can again be proved by induction on $p$
that the functions $c_0,c_1,c_2,\ldots$ are polynomials
in $\eta,\eta',\eta'',\ldots$
and that the functions $d_0,d_1,d_2,\ldots$ are polynomials
in $c_0,c_1,c_2,\ldots$ and $g,g',g'',\ldots$.
As a consequence, all the functions $c_k$ and $d_k$ are infinitely smooth.
\end{proof}

\begin{rem}
The expressions \eqref{eq:c_formulas} and \eqref{eq:d_formulas}
can be easily derived with various computer algebra systems.
For example, in SageMath we used the following commands
(the expression $1/n$ is denoted by $h$):
\begin{verbatim}
var('u, h, c1, c2, c3, c4, c5'); (eta, g) = function('eta, g')
phiexpansion1 = u + h * eta(u)
phiexpansion2 = u + h * taylor(eta(phiexpansion1), h, 0, 2)
phiexpansion3 = u + h * taylor(eta(phiexpansion2), h, 0, 3)
phiexpansion4 = u + h * taylor(eta(phiexpansion3), h, 0, 4)
phiexpansion5 = u + h * taylor(eta(phiexpansion4), h, 0, 5)
print(phiexpansion5.coefficients(h))
phiformal5 = u + c1*h + c2*h^2 + c3*h^3 + c4*h^4 + c5*h^5
lambdaexpansion5 = taylor(g(phiformal5), h, 0, 5)
print(lambdaexpansion5.coefficients(h))
\end{verbatim}
We also performed similar computations in Wolfram Mathematica, starting with
\begin{verbatim}
phiexpansion0 = u + O[h]
phiexpansion1 = Series[u + h * eta[phiexpansion0], {h, 0, 1}]
\end{verbatim}
\end{rem}

\begin{rem}\label{rem:change_denominator}
If the functions $d_0,d_1,\ldots$ are infinitely smooth,
then one can transform an asymptotic expansion into negative powers of $n+s_1$
into an asymptotic expansion in negative powers of $n+s_2$. For example,
suppose we have
\[
\la_{n,j}=\sum_{k=0}^p \frac{{d}_k(u_{n,j})}{(n+2)^k} + O\left(\frac{1}{(n+2)^{p+1}}\right),
\]
and we want

\vspace{-7mm}
\[
\la_{n,j}=\sum_{k=0}^p \frac{f_k(u_{n,j})}{(n+1)^k} + O\left(\frac{1}{(n+1)^{p+1}}\right).
\]
For $k=0,1$, we have
\begin{align*}
d_k\left(\frac{j\pi}{n+2}\right)
&=d_k\left(\frac{j\pi}{(n+1)\left(1+\frac{1}{n+1}\right)}\right)\\
&= d_k\left(\frac{j\pi}{n+1}-\frac{j\pi}{(n+1)^2}+O\left(\frac{1}{(n+1)^3}\right)\right)\\
&= d_k\left(\frac{j\pi}{n+1}\right)-d_k'\left(\frac{j\pi}{n+1}\right)\frac{j\pi}{n+1}\,\frac{1}{n+1}+O\left(\frac{1}{(n+1)^4}\right),
\end{align*}
and thus

\vspace{-7mm}
\begin{align*}
& d_0\left(\frac{j\pi}{n+2}\right)+d_1\left(\frac{j\pi}{n+2}\right)\frac{1}{n+2}+O\left(\frac{1}{(n+2)^2}\right)\\
& = d_0\left(\frac{j\pi}{n+1}\right)-d_0'\left(\frac{j\pi}{n+1}\right)\frac{j\pi}{n+1}\,\frac{1}{n+1}\\
& \quad +  d_1\left(\frac{j\pi}{n+1}\right)\frac{1}{n+1} + O\left(\frac{1}{(n+1)^2}\right),
\end{align*}
resulting in the equalities $f_0(x)=d_0(x)$ and  $f_1(x)=d_1(x)-xd_0'(x)$.
\end{rem}

\begin{rem}
The hard part of the work in \cite{BBGM2015simpleloop,BGM2017relaxed}
was to derive equation \eqref{eq:main_equation} and an explicit formula for $\eta$,
to verify that $\eta$ is sufficiently smooth,
to establish upper bounds for the functions $\rho_n$,
and to prove that \eqref{eq:main_equation}
has a unique solution for every $n$ large enough and for every $j$.
Moreover, all this work was done under the assumption that $g$
has some sort of smoothness of a finite order.
In Proposition~\ref{prop:regular_asymptotics_from_main_equation}
we just require all these properties.
\end{rem}

\section{Uniqueness of the regular asymptotic expansion}
\label{sec:uniqueness}

As in the previous section, we fix some $s>0$.

If there exists an asymptotic expansion of the form \eqref{eq:lambda_expansion_with_O},
then the functions $d_0,d_1,d_2,\ldots$ are uniquely determined.
Let us state and prove this fact formally.
Instead of requiring \eqref{eq:lambda_expansion_with_O} for all $n$ and $j$,
we assume it holds for a set of pairs $(n,j)$
such that the quotients $u_{n,j}\eqdef j\pi/(n+s)$ ``asymptotically fill'' $[0,\pi]$.
Here is the corresponding technical definition.

\begin{defn}\label{defn:asymptotically_fill_by_quotients}
Let $J$ be a subset of $\bN^2$.
We say that $J$ \emph{asymptotically fills $[0,\pi]$ by quotients}
if for every $x$ in $[0,\pi]$, every $N$ in $\bN$, and every $\de>0$
there is a pair of numbers $(n,j)$ in $J$
such that $n\ge N$, $1\le j\le n$, and $|u_{n,j}-x|\le \de$.
\end{defn}

It is easy to see that $J$ asymptotically fills $[0,\pi]$ by quotients
if and only if the set $\{u_{n,j}\colon\ (n,j)\in J\}$ is dense in $[0,\pi]$.

\begin{prop}\label{prop:uniqueness}
Let $p \ge 0$ be an integer, let
$d_0,d_1,\ldots,d_p$ and $\widetilde{d}_0,\widetilde{d}_1,\ldots,\widetilde{d}_p$
be continuous functions on $[0,\pi]$, let $C>0$,
and let $J$ be a subset of $\bN^2$ such that $J$ asymptotically fills $[0,\pi]$ by quotients.
Suppose that for every pair $(n,j)$ in $J$ the inequalities
\[
\left|\la_{n,j}-\sum_{k=0}^p \frac{d_k(u_{n,j})}{(n+s)^k}\right|\le\frac{C}{(n+s)^{p+1}},\quad
\left|\la_{n,j}-\sum_{k=0}^p \frac{\widetilde{d}_k(u_{n,j})}{(n+s)^k}\right|\le\frac{C}{(n+s)^{p+1}}
\]
hold.
Then $d_k(x)=\widetilde{d}_k(x)$ for every $k \in \{0,\ldots,p\}$ and every $x \in [0,\pi]$.
\end{prop}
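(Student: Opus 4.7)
The plan is to argue by induction on $k$ that $d_k \equiv \widetilde{d}_k$ for $k=0,1,\ldots,p$. Setting $e_k \eqdef d_k - \widetilde{d}_k$ and combining the two hypothesized bounds via the triangle inequality, we obtain
\[
\left|\sum_{k=0}^p \frac{e_k(u_{n,j})}{(n+s)^k}\right|\le\frac{2C}{(n+s)^{p+1}}
\qquad \text{for every } (n,j)\in J.
\]
The problem thus reduces to showing that a finite ``polynomial in $1/(n+s)$'' with continuous coefficient functions on $[0,\pi]$ that is $O((n+s)^{-(p+1)})$ on the filling set $J$ must have identically zero coefficients.

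For the base case I would fix any $x\in[0,\pi]$ and invoke Definition~\ref{defn:asymptotically_fill_by_quotients} to extract a sequence $(n_m,j_m)\in J$ with $n_m\to\infty$ and $u_{n_m,j_m}\to x$. Along this sequence the right-hand side vanishes, and by continuity of each $e_k$ together with the fact that $(n_m+s)^{-k}\to 0$ for $k\ge 1$ against values of $e_k$ bounded uniformly on the compact interval $[0,\pi]$, only the $k=0$ term survives in the limit, giving $e_0(x)=0$. For the inductive step, assuming $e_0,\ldots,e_{k-1}$ already vanish identically, the bound becomes $\bigl|\sum_{i=k}^p e_i(u_{n,j})/(n+s)^i\bigr| \le 2C/(n+s)^{p+1}$; multiplying by $(n+s)^k$ converts this into
\[
\left|e_k(u_{n,j})+\sum_{i=k+1}^p\frac{e_i(u_{n,j})}{(n+s)^{i-k}}\right|\le\frac{2C}{(n+s)^{p+1-k}},
\]
and the same limiting procedure along a sequence from $J$ with $n_m\to\infty$ and $u_{n_m,j_m}\to x$ forces $e_k(x)=0$.

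The only point worth highlighting is that $n_m$ must be sent to infinity \emph{simultaneously} with $u_{n_m,j_m}$ approaching the prescribed $x$; this is precisely the conjunction of $n\ge N$ and $|u_{n,j}-x|\le\delta$ supplied by Definition~\ref{defn:asymptotically_fill_by_quotients}, and the observation right after that definition (density of $\{u_{n,j}\colon (n,j)\in J\}$ in $[0,\pi]$ combined with the $N$-freedom) delivers such a sequence at once. No substantive obstacle is expected: the argument is a routine term-by-term identification of asymptotic series, adapted only to the fact that the coefficient functions are sampled at moving points $u_{n,j}$ rather than at a single fixed point.
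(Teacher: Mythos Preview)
Your proposal is correct and follows essentially the same route as the paper's proof: induction on the index $k$, using the triangle inequality to reduce to a bound on $\sum_k e_k(u_{n,j})/(n+s)^k$, then the filling property together with continuity to kill the leading surviving coefficient. The only cosmetic difference is that you phrase the limiting argument via sequences $(n_m,j_m)$ with $n_m\to\infty$ and $u_{n_m,j_m}\to x$, whereas the paper writes out the equivalent $\varepsilon$--$\delta$ version; your treatment of the base case is also more explicit than the paper's.
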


\begin{proof}
Denote the function $d_p-\widetilde{d}_p$ by $h_p$.
It is clear that $h_0=0$.
Proceeding by mathematical induction on $p$,
we assume that $h_k$ is the zero constant for every $k$ with $k<p$,
and we have to show that $h_p$ is the zero constant.

Let $x\in[0,\pi]$ and $\eps>0$.
Using the continuity of $h_p$ at the point $x$,
choose $\de>0$ such that $|y-x|\le\de$ implies $|h_p(y)-h_p(x)|\le\eps/2$.
Take $N$ such that $2C/(N+s)\le\eps/2$.
After that, pick $n$ and $j$ such that $(n,j)\in J$, $n\ge N$, and $|u_{n,j}-x|\le\de$.
Then
\[
\left|\frac{d_p(u_{n,j})}{(n+s)^p}-\frac{\widetilde{d}_p(u_{n,j})}{(n+s)^p}\right|
\le\frac{2C}{(n+s)^{p+1}},
\]
which implies
\[
|h_p(u_{n,j})|\le\frac{2C}{n+s}\le\frac{2C}{N+s}\le\frac{\eps}{2}.
\]
Finally,
\[
|h_p(x)| \le |h_p(x)-h_p(u_{n,j})| + |h_p(u_{n,j})| \le \frac{\eps}{2}+\frac{\eps}{2} = \eps.
\]
As $\eps >0$ can be chosen arbitrarily, it follows that $h_p$ is identically zero.
\end{proof}

\section{An example with a minimum of the fourth order}
\label{sec:inner_eigenvalues}

We now consider the pentadiagonal Toeplitz matrices generated by the trigonometric polynomial
\begin{equation}\label{eq:g}
g(x)=\left(2\sin\frac{x}{2}\right)^4.
\end{equation}
The function $g$ takes real values, is even, and strictly increases on $[0,\pi]$.
Nevertheless, $g$ does not belong to simple-loop class,
because $g$ has a minimum of the fourth order:
$g(0)=g'(0)=g''(0)=g'''(0)=0$, $g^{(4)}(0)>0$.

The purpose of this section is to recall some results of \cite{BG2017pentadiagonal}
and to derive some new corollaries.
We begin by introducing some auxiliary functions:
\begin{align*}
& \be(x)\eqdef 2\arcsinh\left(\sin\frac{x}{2}\right)
=2\ln\left(\sin\frac{x}{2}+\sqrt{1+\left(\sin\frac{x}{2}\right)^2}\right),\\
& f(x)\eqdef \be'(x)=\frac{\cos\frac{x}{2}}{1+\left(\sin\frac{x}{2}\right)^2},\\
& \etaodd_n(x) \eqdef 2\arctan\left(\frac{1}{f(x)}\coth\frac{(n+2)\be(x)}{2}\right),\\
& \etaeven_n(x) \eqdef 2\arctan\left(\frac{1}{f(x)}\tanh\frac{(n+2)\be(x)}{2}\right),\\
& \eta_{n,j}(x) \eqdef
\begin{cases}
\etaodd_n(x), & \text{if}\ j\ \text{is odd},\\
\etaeven_n(x), & \text{if}\ j\ \text{is even}.
\end{cases}
\end{align*}
As previously, we denote by $\ph_{n,j}$ the points in $(0,\pi)$ such that $\la_{n,j}=g(\ph_{n,j})$.
In this example, we let $u_{n,j}$ stand for $j\pi/(n+2)$.

In \cite[Theorems 2.1 and 2.3]{BG2017pentadiagonal},
two of us used Elouafi's formulas \cite{Elouafi2014} for the determinants of Toeplitz matrices
and derived exact equations for the eigenvalues of $T_n(g)$.
Namely, it was proved that there exists an $N_0$
such that if $n\ge N_0$ and $j\in\{1,\ldots,n\}$,
then $\ph_{n,j}$ is the unique solution in the interval $(u_{n,j},u_{n,j+1})$ of the equation
\begin{equation}\label{eq:main_pentadiagonal}
x = u_{n,j} + \frac{\eta_{n,j}(x)}{n+2}.
\end{equation}
The corresponding equation in~\cite{BG2017pentadiagonal}
is written in a slightly different (but equivalent) form,
without joining the cases of odd and even values of $j$.

Equation~\eqref{eq:main_pentadiagonal} is hard to derive but easy to verify numerically.
We computed the eigenvalues by general numerical methods in Wolfram Mathematica,
using high-precision arithmetic with $100$ decimal digits after the floating point,
and obtained  coincidence in \eqref{eq:main_pentadiagonal}
up to $99$ decimal digits for each $n$ from $10$ to $100$ and for each $j$ from $1$ to $n$.

Equation~\eqref{eq:main_pentadiagonal} is more complicated than \eqref{eq:main_equation},
in the sense that now instead of one function $\eta$
we have a family of functions, depending on $n$ and on the parity of $j$.

Notice that if $x$ is not too close to zero and $n$ is large enough,
then $\be(x)$ is not too close to zero,
the product $\frac{n+2}{2}\be(x)$ is large
and the expressions $\tanh\frac{(n+2)\be(x)}{2}$
and $\coth\frac{(n+2)\be(x)}{2}$ are very close to $1$.
Denote by $\eta$ the function obtained from $\etaodd_n$ and $\etaeven_n$
by neglecting these expressions, that is,
\begin{equation}\label{eq:eta_pentadiagonal}
\eta(x)\eqdef 2\arctan\left(\frac{1}{f(x)}\right),
\end{equation}
and put
\[
\rho_{n,j}(x)\eqdef\frac{\eta_{n,j}(x)-\eta(x)}{n+2}.
\]
Then the main equation \eqref{eq:main_pentadiagonal} takes the form \eqref{eq:main_equation} with $s=2$:
\begin{equation}\label{eq:main_equation_pentadiagonal}
x = u_{n,j} + \frac{\eta(x)}{n+2} + \rho_{n,j}(x).
\end{equation}
So, for each $(n,j)$ in $J_0$ the number $\ph_{n,j}$ is the unique solution of
\eqref{eq:main_equation_pentadiagonal} in the interval $(u_{n,j},u_{n,j+1})$.

Figure~\ref{fig:eta} shows that the functions $\etaodd_{64}$, $\etaeven_{64},$
and $\eta$ almost coincide outside a small neighborhood of zero.

\begin{figure}[ht]
\centering
\includegraphics[height=7.5cm]{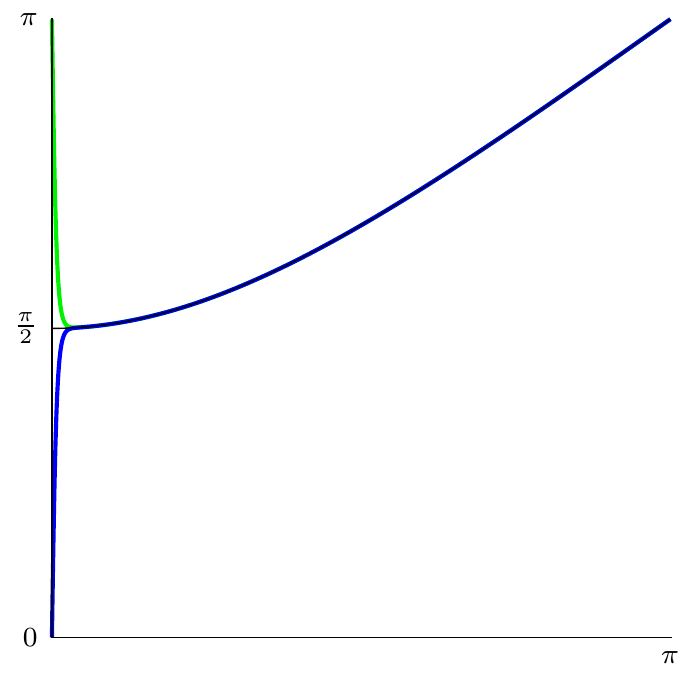}
\caption{Plots of $\etaodd_{64}$ (green), $\etaeven_{64}$ (blue), and $\eta$ (black)}
\label{fig:eta}
\end{figure}

The following lemma provides us with upper estimates for $\rho_{n,j}(x)$.

\begin{lem}\label{lem:rho_upper_estimate}
Let $n,j\in\bN$. If $1\le j\le n/2$, then
\begin{equation}\label{eq:rho_upper_estimate_j_small}
\sup_{u_{n,j}\le x\le u_{n,j+1}} |\rho_{n,j}(x)| \le \frac{6e^{-2j}}{n+2}.
\end{equation}
If $n/2\le j\le n$, then
\begin{equation}\label{eq:rho_upper_estimate_j_large}
\sup_{u_{n,j}\le x\le u_{n,j+1}} |\rho_{n,j}(x)| \le 6e^{-(n+2)\pi/2}.
\end{equation}
\end{lem}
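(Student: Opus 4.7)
The plan is to estimate $|\eta_{n,j}(x)-\eta(x)|$ by exploiting that $\tanh y$ and $\coth y$ are exponentially close to $1$ once $y$ is moderately large. Writing $y\eqdef(n+2)\be(x)/2$ and letting $g(y)$ denote $\coth y$ for odd $j$ and $\tanh y$ for even $j$, both $\eta_{n,j}(x)$ and $\eta(x)$ have the form $2\arctan(\cdot/f(x))$ with positive arguments. The arctangent subtraction identity then gives
\[
\eta_{n,j}(x)-\eta(x)
= 2\arctan\!\left(\frac{(g(y)-1)\,f(x)}{f(x)^2+g(y)}\right),
\]
and $|\arctan z|\le|z|$ yields $|\eta_{n,j}(x)-\eta(x)|\le 2|g(y)-1|f(x)/(f(x)^2+g(y))$.

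Since $0\le f(x)\le 1$ on $[0,\pi]$, and since $g(y)\ge\tanh 1>3/4$ once $y\ge 1$, the denominator $f(x)^2+g(y)$ is bounded below by a positive absolute constant. Combined with the standard estimates $|\tanh y-1|\le 2e^{-2y}$ and $|\coth y-1|=2/(e^{2y}-1)\le 3e^{-2y}$ for $y\ge 1$, this reduces the lemma to finding an appropriate lower bound on $y$ in each regime; dividing by $n+2$ afterwards produces $|\rho_{n,j}(x)|$.

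For $1\le j\le n/2$ the goal is $(n+2)\be(u_{n,j})/2\ge j$, equivalently $\arcsinh(\sin(w/2))\ge w/\pi$ for $w\eqdef j\pi/(n+2)\in(0,\pi/2]$. Both sides vanish at $w=0$; at $w=\pi/2$ the left side equals $\ln((\sqrt2+\sqrt6)/2)\approx 0.659>1/2$; and $w\mapsto\arcsinh(\sin(w/2))$ is concave on $[0,\pi/2]$, since its second derivative $-\sin(w/2)/(2(1+\sin^2(w/2))^{3/2})$ is negative there. Concavity together with the endpoint comparison forces the inequality throughout. Monotonicity of $\be$ propagates $y\ge j$ to every $x\in[u_{n,j},u_{n,j+1}]$, and combining with the previous paragraph gives $|\eta_{n,j}(x)-\eta(x)|\le 6e^{-2j}$, i.e.\ \eqref{eq:rho_upper_estimate_j_small}.

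For $n/2\le j\le n$ the point $u_{n,j}$ lies in a fixed positive subinterval of $(0,\pi)$ uniformly in $n$, so $\be(u_{n,j})$ is bounded below by a positive absolute constant and $y$ grows linearly in $n+2$. The same chain of estimates then produces an exponential-in-$n$ bound of the form \eqref{eq:rho_upper_estimate_j_large}, with the prefactor $1/(n+2)$ absorbed into the exponential. The main technical point is the clean inequality $\arcsinh(\sin(w/2))\ge w/\pi$ on $[0,\pi/2]$, which unlocks the sharp exponent $2j$ in the first range; tracking the absolute constants afterwards to land at the stated factor $6$ is routine bookkeeping.
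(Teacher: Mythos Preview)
Your argument is correct and follows the paper's strategy: lower-bound $y=(n+2)\beta(x)/2$ using $\beta(x)\ge 2x/\pi$ on $[0,\pi/2]$ (which the paper merely asserts as ``readily verified'' while you justify it via concavity), then invoke $|\tanh y-1|,|\coth y-1|=O(e^{-2y})$ for $y\ge 1$. Your use of the arctangent subtraction identity in place of the paper's Lipschitz bound $|\arctan a-\arctan b|\le|a-b|$ is a cosmetic variation---and in fact a mild improvement, since it sidesteps the need for the lower bound $f(x)>1/(n+2)$ that the paper uses in the large-$j$ case.
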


\begin{proof}
First suppose that $1\le j\le n/2$ and $u_{n,j}\le x\le u_{n,j+1}$.
Then
\[
\frac{j\pi}{n+2}\le x\le \frac{(j+1)\pi}{n+2} \le\frac{\pi}{2}.
\]
It is readily verified that $\be(x)\ge 2x/\pi$ for every $x$ in $[0,\pi/2]$.
Consequently,
\[
\frac{(n+2)\be(x)}{2} \ge j.
\]
It is also easy to see that
$0\le 1-\tanh(y)\le 2e^{-2y}$
and
$0\le \coth(y)-1\le 3e^{-2y}$ for $y\ge 1$,
$f(x)>1/2$ for $x$ in $[0,\pi/2]$,
and that $\arctan$ is Lipschitz continuous with coefficient $1$.
Thus
\[
|\eta_{n,j}(x)-\eta(x)|\le 6e^{-2j},
\]
which yields \eqref{eq:rho_upper_estimate_j_small}.

Now consider the case $n/2\le j\le n$.
Here we use the estimates $\be(x)\ge x/2$ and $f(x)>1/(n+2)$ to obtain
\[
\frac{(n+2)\be(x)}{2}\ge\frac{(n+2)\pi}{4},
\]
\[
|\eta_{n,j}(x)-\eta(x)|\le 6(n+2)e^{-(n+2)\pi/2},
\]
which results in \eqref{eq:rho_upper_estimate_j_large}.
\end{proof}

The next proposition is similar to Theorem~2.3 from \cite{BG2017pentadiagonal},
but here we join the cases of odd and even values of $j$
and get rid of the additional requirement that $n\ge N_0$.
We use essentially the same arguments to prove the existence of the solution,
but a simpler argument to prove the uniqueness.

\begin{prop}\label{prop:existance_and_uniqueness}
For all $n\ge 1$ and all $j \in \{1,\ldots,n\}$,
the number $\ph_{n,j}$ is the unique solution of the equation \eqref{eq:main_pentadiagonal}
on the interval $(u_{n,j},u_{n,j+1})$.
\end{prop}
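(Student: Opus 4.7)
The plan is to split the claim into three pieces --- existence of a zero of the defining equation in the interval $(u_{n,j},u_{n,j+1})$, identification of that zero with $\ph_{n,j}$, and uniqueness of the zero --- and to handle the uniqueness step by a clean monotonicity bound that replaces the fixed-point argument of~\cite{BG2017pentadiagonal}, thereby removing the threshold $n\ge N_0$ of that paper. Recast \eqref{eq:main_pentadiagonal} as $F_{n,j}(x)=0$ with
\[
F_{n,j}(x)\;\eqdef\;(n+2)(x-u_{n,j})-\eta_{n,j}(x).
\]

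For existence, observe that on $(0,\pi]$ the argument of the $2\arctan(\cdot)$ appearing in $\eta_{n,j}$ is strictly positive and finite, so $0<\eta_{n,j}(x)<\pi$. Therefore $F_{n,j}(u_{n,j})=-\eta_{n,j}(u_{n,j})<0$ and $F_{n,j}(u_{n,j+1})=\pi-\eta_{n,j}(u_{n,j+1})>0$, and the intermediate value theorem yields a zero $\psi_{n,j}\in(u_{n,j},u_{n,j+1})$ for every $n\ge 1$ and $1\le j\le n$. To identify $\psi_{n,j}$ with $\ph_{n,j}$ I appeal to the Elouafi-type factorisation of $\det(T_n(g)-g(x)I)$ already worked out in~\cite[Theorem~2.1]{BG2017pentadiagonal}, whose derivation is valid for every $n\ge 1$ and exhibits the roots of the determinant in $(0,\pi)$ as exactly the solutions of the $n$ equations $F_{n,k}(x)=0$, $k=1,\dots,n$. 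Since the intervals $(u_{n,k},u_{n,k+1})$ are disjoint and $g$ is strictly increasing on $[0,\pi]$, the $n$ values $g(\psi_{n,k})$ are then $n$ distinct eigenvalues and must exhaust the spectrum, so matching the orderings gives $\psi_{n,j}=\ph_{n,j}$.

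For uniqueness I show that $F_{n,j}$ is strictly increasing on $(u_{n,j},u_{n,j+1})$, that is, $\eta_{n,j}'(x)<n+2$ throughout the interval. Setting $\tau=(n+2)\be(x)/2$ (so $\tau'(x)=(n+2)f(x)/2$) and using the identity $\arctan u+\arctan(1/u)=\pi/2$ to rewrite $\etaodd_n=\pi-2\arctan(f\tanh\tau)$, a chain-rule differentiation of both $\etaodd_n$ and $\etaeven_n=2\arctan(\tanh\tau/f)$ reduces the desired bound, after routine algebraic rearrangement, to
\[
2|f'(x)|\tanh\tau<(n+2)\bigl(1+f(x)^2\bigr)\qquad\text{(odd case)}
\]
and the analogous inequality $2|f'(x)|<(n+2)\tanh\tau\,(1+f(x)^2)$ in the even case. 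Both follow from two $n$-independent facts: an elementary maximisation yielding a universal upper bound $|f'(x)|\le M$ strictly less than one, and the estimate $\be(x)\ge 2x/\pi$ used in the proof of Lemma~\ref{lem:rho_upper_estimate}, which forces $\tau(x)\ge 1$ and hence $\tanh\tau\ge\tanh 1$ on every interval $(u_{n,j},u_{n,j+1})$ with $j\ge 1$.

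The main obstacle is precisely this derivative estimate at small $n$. For large $n$ any crude bound on $|\eta_{n,j}'|$ would suffice, but at $n=1$ or $n=2$ one genuinely needs the sharp universal bound on $|f'|$ together with the lower bound on $\tanh\tau$ to force strict inequality; it is exactly this sharpness that allows the cut-off $N_0$ of~\cite{BG2017pentadiagonal} to be dropped. Once the strict monotonicity of $F_{n,j}$ is secured, the remaining steps --- intermediate value existence, the Elouafi counting identification, and monotonicity-driven uniqueness --- are immediate and work uniformly for all $n\ge 1$.
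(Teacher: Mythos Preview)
Your existence step and the identification of the $\psi_{n,j}$ with the $\ph_{n,j}$ are exactly what the paper does. The genuine difference is in the uniqueness argument, and here the paper's route is considerably simpler than yours. The paper argues by counting: once one solution $\psi_{n,j}$ has been found in each of the $n$ disjoint intervals $(u_{n,j},u_{n,j+1})$, the values $g(\psi_{n,1}),\ldots,g(\psi_{n,n})$ are $n$ distinct eigenvalues and hence exhaust the spectrum of the $n\times n$ matrix; an additional solution $x$ in any interval would produce, via Theorem~2.1 of \cite{BG2017pentadiagonal} and the injectivity of $g$, an $(n+1)$th distinct eigenvalue, which is impossible. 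This pigeonhole step requires no analysis whatsoever and works uniformly for every $n\ge 1$---that is what removes the threshold $N_0$, not a sharp derivative bound. In fact you already have all the ingredients for this argument in your identification paragraph; your separate monotonicity step is correct but unnecessary.

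Your monotonicity argument does go through, but with a small gap you should be aware of. The inequality $\be(x)\ge 2x/\pi$ that you invoke from Lemma~\ref{lem:rho_upper_estimate} is established there only for $x\in[0,\pi/2]$; it fails near $x=\pi$ since $\be(\pi)=2\ln(1+\sqrt{2})<2$. To get $\tau\ge 1$ on intervals lying in $(\pi/2,\pi)$ you must also use the second estimate $\be(x)\ge x/2$ from the same lemma, which gives $\tau\ge(n+2)\pi/8>1$ for $n\ge1$. With both bounds in hand your inequalities $2|f'|\tanh\tau<(n+2)(1+f^2)$ (odd) and $2|f'|<(n+2)\tanh\tau\,(1+f^2)$ (even) do follow, since $\max_{[0,\pi]}|f'|<1/2$, $\tanh\tau\ge\tanh 1$, and $n+2\ge 3$. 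So your approach is valid, gives the extra information that $F_{n,j}$ is strictly monotone, and would be the natural one if the eigenvalue interpretation of solutions were unavailable; the paper's counting argument trades that extra information for a two-line proof.
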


\begin{proof}
Let $n\ge 1$.
For each $j \in \{1,\ldots,n\}$,
the main equation can be written in the form
\begin{equation}\label{eq:main_left}
(n+2)x-\eta_{n,j}(x)=j\pi.
\end{equation}
By Theorem~2.1 from \cite{BG2017pentadiagonal},
if $x$ belongs to $(0,\pi)$ and satisfies \eqref{eq:main_left} for some integer $j$,
then the number $g(x)$ is an eigenvalue of $T_n(g)$.

Notice that $f(x)>0$ and $\be(x)>0$ for every $x \in (0,\pi)$.
Using the definitions of $\tanh$, $\coth$, and $\arctan$,
we conclude that $0<\eta_{n,j}(x)<\pi$ for each $x \in (0,\pi)$;
see also Figure~\ref{fig:eta}.
Denote the left-hand side of \eqref{eq:main_left} by $F_{n,j}(x)$.
Then
\begin{align*}
F_{n,j}(u_{n,j}) &= j\pi-\eta_{n,j}(u_{n,j})<j\pi,\\
F_{n,j}(u_{n,j+1}) &= (j+1)\pi-\eta_{n,j}(u_{n,j+1})>j\pi.
\end{align*}
Hence, by the intermediate value theorem, equation \eqref{eq:main_left}
has at least one solution in the interval $(u_{n,j},u_{n,j+1})$.
At this moment we do not know whether this solution is unique.
So let us, for each $j$, denote by $\psi_{n,j}$ one of the solutions of \eqref{eq:main_left} on $(u_{n,j},u_{n,j+1})$.

Contrary to what we want, assume that for some $j \in \{1,\ldots,n\}$
equation \eqref{eq:main_left} has another solution $x$ belonging to $(u_{n,j},u_{n,j+1})$.
The $n+1$ numbers $\psi_{n,1},\ldots,\psi_{n,n},x$ are different.
Since $g$ is strictly increasing on $[0,\pi]$,
the corresponding eigenvalues $g(\psi_{n,1}),\ldots,g(\psi_{n,n}),g(x)$ are different, too.
This contradicts the fact that the matrix $T_n(g)$ has only $n$ eigenvalues.

We conclude that for each $j$ equation~\eqref{eq:main_left} has only one solution $\psi_{n,j}$ in $(u_{n,j},u_{n,j+1})$.
The numbers $\psi_{n,j}$ satisfy $\psi_{n,1}<\ldots<\psi_{n,n}$,
and their images under $g$ are eigenvalues of $T_n(g)$,
so $g(\psi_{n,j})=\la_{n,j}$ and $\psi_{n,j}=\ph_{n,j}$ for all $j$.
\end{proof}

The next proposition gives asymptotic formulas
for the eigenvalues $\la_{n,j}$ provided $j$ is ``not too small''.
It mimics Theorem~2.6 from \cite{BG2017pentadiagonal},
the novelty being that we here join the cases of odd and even values of $j$
and state the result for an arbitrary order $p$.

\begin{prop}\label{prop:inner_eigenvalues}
For every $p \in \bN$, the functions $\rho_{n,j}$ admit the asymptotic upper estimate
\begin{equation}\label{eq:rho_is_exp_small}
\max_{(p/2)\log(n+2)\le j\le n}\;\sup_{x\in[u_{n,j},u_{n,j+1}]} |\rho_{n,j}(x)| = O\left(\frac{1}{n^{p+1}}\right).
\end{equation}
Moreover, for every $p \in \bN$, every $n \in \bN$, and every $j$ satisfying
\begin{equation}\label{eq:j_ge_log}
\frac{p}{2}\log(n+2) \le j \le n,
\end{equation}
the numbers $\ph_{n,j}$ and $\la_{n,j}$ have asymptotic expansions of the form
\begin{align}
\label{eq:ph_expansion_pentadiagonal_far_from_0}
\ph_{n,j}&=\sum_{k=0}^p \frac{c_k(u_{n,j})}{(n+2)^k} + O\left(\frac{1}{(n+2)^{p+1}}\right),
\\
\label{eq:la_expansion_pentadiagonal_far_from_0}
\la_{n,j}&=\sum_{k=0}^p \frac{d_k(u_{n,j})}{(n+2)^k} + O\left(\frac{1}{(n+2)^{p+1}}\right),
\end{align}
where the upper estimates of the residue terms are uniform in $j$,
the functions $c_k$ and $d_k$ are infinitely smooth
and can be expressed in terms of $\eta$ and $g$
by the formulas shown in the proof of Proposition~\ref{prop:regular_asymptotics_from_main_equation}.
\end{prop}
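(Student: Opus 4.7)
The plan is to combine Lemma~\ref{lem:rho_upper_estimate} with Proposition~\ref{prop:regular_asymptotics_from_main_equation}, applied to the main equation \eqref{eq:main_equation_pentadiagonal} with shift $s=2$. The threshold $j\ge(p/2)\log(n+2)$ in the hypothesis is chosen exactly so that the exponential bound on $\rho_{n,j}$ supplied by Lemma~\ref{lem:rho_upper_estimate} becomes polynomial decay of order $p+1$, which is what the abstract machinery of Section~\ref{sec:regular_expansions} demands.

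To prove \eqref{eq:rho_is_exp_small}, I fix $p\in\bN$ and split the range of $j$ into two pieces. If $(p/2)\log(n+2)\le j\le n/2$, then \eqref{eq:rho_upper_estimate_j_small} gives $|\rho_{n,j}(x)|\le 6 e^{-2j}/(n+2)$; the lower bound on $j$ forces $e^{-2j}\le(n+2)^{-p}$, so $|\rho_{n,j}(x)|\le 6/(n+2)^{p+1}$. If $n/2\le j\le n$, then \eqref{eq:rho_upper_estimate_j_large} gives a bound that decays super-polynomially in $n$, which is in particular $O(1/(n+2)^{p+1})$. The two cases together yield \eqref{eq:rho_is_exp_small}.

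For the asymptotic expansions, I take $J_p\eqdef\{(n,j)\in\bN^2 : n\ge 1,\ (p/2)\log(n+2)\le j\le n\}$ and apply Proposition~\ref{prop:regular_asymptotics_from_main_equation} with $s=2$. Existence and uniqueness of $\ph_{n,j}$ as a solution of \eqref{eq:main_equation_pentadiagonal} on $(u_{n,j},u_{n,j+1})$ are furnished by Proposition~\ref{prop:existance_and_uniqueness}. One must confirm that $\eta(x)=2\arctan(1/f(x))$ is $C^\infty$ on $[0,\pi]$: on $[0,\pi)$ this is immediate because $f>0$, while near $x=\pi$ the removable singularity is disposed of by the identity $\arctan(1/y)=\pi/2-\arctan(y)$ for $y>0$, which rewrites $\eta$ as $\pi-2\arctan(f(x))$, manifestly smooth. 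The bound from the preceding paragraph supplies the hypothesis \eqref{eq:rho_is_small}, so Proposition~\ref{prop:regular_asymptotics_from_main_equation} yields \eqref{eq:ph_expansion_pentadiagonal_far_from_0} and \eqref{eq:la_expansion_pentadiagonal_far_from_0} uniformly on $J_p$, with coefficients $c_k$ and $d_k$ expressed by \eqref{eq:c_formulas}--\eqref{eq:d_formulas}; smoothness of the coefficients follows from that of $\eta$ and $g$.

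The argument is essentially a clean transcription of the already-developed abstract framework. The two technical nuances are the smoothness of $\eta$ at the right endpoint, handled in one line via the $\arctan$ identity, and the correct choice of logarithmic threshold on $j$, which is exactly what balances the exponential remainder of Lemma~\ref{lem:rho_upper_estimate} against the target asymptotic order $p+1$.
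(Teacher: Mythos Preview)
Your proof is correct and follows essentially the same route as the paper: split the range of $j$ at $n/2$, use the two estimates of Lemma~\ref{lem:rho_upper_estimate} to obtain \eqref{eq:rho_is_exp_small}, and then invoke Proposition~\ref{prop:regular_asymptotics_from_main_equation}. Your additional remarks on the smoothness of $\eta$ at $x=\pi$ and on the role of Proposition~\ref{prop:existance_and_uniqueness} make explicit two points that the paper leaves implicit (the paper instead exhibits the derivatives \eqref{eq:eta_derivatives} directly), but the substance is the same.
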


\begin{proof}
We have to verify the upper bound \eqref{eq:rho_is_exp_small}.
The other statements then follow from Proposition~\ref{prop:regular_asymptotics_from_main_equation}.
Let $p,n\in\bN$ and $j$ satisfy \eqref{eq:j_ge_log}.
If $j\le n/2$, then \eqref{eq:rho_upper_estimate_j_small} gives
\[
\frac{6e^{-2j}}{n+2}\le \frac{6e^{-p\log(n+2)}}{n+2}=\frac{6}{(n+2)^{p+1}},
\]
while if $j>n/2$, we obtain from \eqref{eq:rho_upper_estimate_j_large} that
\[
e^{-(n+2)\pi/2}=O\left(\frac{1}{n^{p+1}}\right).
\]
Joining these two cases we arrive at \eqref{eq:rho_is_exp_small}.
\end{proof}

In Proposition~\ref{prop:regular_asymptotics_from_main_equation}
we expressed the first of the coefficients $c_k$ and $d_k$
in terms of the first derivatives of $g$ and $\eta$.
Here are explicit formulas for $g',\ldots,g^{(5)}$:
\begin{equation}\label{eq:g_derivatives}
\begin{aligned}
g'(x) &= 23\cos\frac{x}{2}\left(\sin\frac{x}{2}\right)^3,
&
g''(x) &= 16 (1+2\cos(x)) \left(\sin\frac{x}{2}\right)^2,
\\
g'''(x) &= - 8\sin(x)+16\sin(2x),
&
g^{(4)}(x) &= -8\cos(x)+32\cos(2x),
\\
g^{(5)}(x) &= 8\sin(x)-64\sin(2x).
\end{aligned}
\end{equation}
For $\eta',\ldots,\eta^{(4)}$ we have
\begin{equation}\label{eq:eta_derivatives}
\begin{aligned}
\eta'(x) &= \frac{\sin\frac{x}{2}}{\left(1 + \left(\sin\frac{x}{2}\right)^2\right)^{1/2}},
&
\eta''(x) &= \frac{\sqrt{2}\cos\frac{x}{2}}{\left(3-\cos(x)\right)^{3/2}},
\\
\eta'''(x) &= -\frac{5\sin\frac{x}{2}+\sin\frac{3x}{2}}{\sqrt{2} (3 - \cos(x))^{5/2}}, &
\eta^{(4)}(x) &=
\frac{-4\cos\frac{x}{2}+19\cos\frac{3x}{2}+\cos\frac{5x}{2}}%
{2\sqrt{2} (3 - \cos(x))^{7/2}}.
\end{aligned}
\end{equation}

\begin{numtest} \label{numtest:inner_eigenvalues}
If order to test \eqref{eq:la_expansion_pentadiagonal_far_from_0} numerically for $p=4$,
we computed $g',\ldots,g^{(4)}$ by \eqref{eq:g_derivatives},
$\eta,\eta',\ldots,\eta^{(3)}$ by \eqref{eq:eta_pentadiagonal} and \eqref{eq:eta_derivatives},
$c_0,c_1,\ldots,c_4$ by \eqref{eq:c_formulas} and $d_0,d_1,\ldots,d_4$ by \eqref{eq:d_formulas}.
The exact eigenvalues were computed by simple iteration in equation~\eqref{eq:main_equation_pentadiagonal}
and independently by general eigenvalue algorithms (for $n\le 1024$).
All computations were made in high-precision arithmetic with $100$ decimal digits after the floating point,
in SageMath and independently in Wolfram Mathematica.
Denote by $E_{n,4}$ the maximal error in \eqref{eq:la_expansion_pentadiagonal_far_from_0}, with $p=4$:
\[
E_{n,4} \eqdef \max_{2\log(n+2)\le j\le n}
\left|\la_{n,j}-\sum_{k=0}^{4} \frac{d_k(u_{n,j})}{(n+2)^k}\right|.
\]
The following table shows $E_{n,4}$ and $(n+2)^5 E_{n,4}$
for various values of $n$.
\[
\begin{array}{c|c|c|c|c|c}
\medstrut &
n=64 &
n=256 &
n=1024 &
n=4096 &
n=16384
\\\hline
\medstrut
E_{n,4} &
2.4\cdot 10^{-7} &
3.1\cdot 10^{-10} &
3.2\cdot 10^{-13} &
3.2\cdot 10^{-16} &
3.1\cdot 10^{-19}
\\\hline
\medstrut
(n+2)^5 E_{n,4} &
306.72 &
354.87 &
366.61 &
369.52 &
370.25
\end{array}
\]
We see that the numbers $E_{n,4}$ really behave like $O(1/(n+2)^5)$.
\end{numtest}

\section{An asymptotic formula for the first eigenvalues in the example}
\label{sec:first_eigenvalues}

In this section we study the asymptotic behavior of $\la_{n,j}$
as $n$ tends to $\infty$, considering $j$ as a fixed parameter.

Using the definition of $\arctan$ and the formula for
$\tan(x+j\pi/2)$, we can rewrite equation~\eqref{eq:main_pentadiagonal}
in the equivalent form
\begin{equation}\label{eq:exact_equation_first_ph}
f(x)^{(-1)^{j+1}} \tanh\frac{(n+2)\be(x)}{2}=(-1)^j \tan\frac{(n+2)x}{2}.
\end{equation}
The first factor on the left-hand side of \eqref{eq:exact_equation_first_ph}
is just $f(x)$ for odd values of $j$ and $1/f(x)$ for even values of $j$.
We know that
\[
\frac{j\pi}{n+2}\le\ph_{n,j}\le\frac{(j+1)\pi}{n+2},
\]
and it is natural to expect that the product $(n+2)\ph_{n,j}$
has a finite limit $\al_j$ as $n$ tends to infinity and $j$ is fixed.
Assuming this and taking into account that
\[
f(x)\to 1,\quad \be(x)\sim x,\quad\text{as}\quad x\to0,
\]
we can pass to the limit in \eqref{eq:exact_equation_first_ph}
to obtain a simple transcendental equation for $\al_j$.
This is an informal motivation of the following formal reasoning.

For each $j$ in $\bN$, denote by $\al_j$ the unique real number
that belongs to the interval $(j\pi,(j+1)\pi)$ and satisfies
\begin{equation}\label{eq:firstcoef_equation}
\tanh\frac{\al_j}{2}=(-1)^j \tan\frac{\al_j}{2}.
\end{equation}
Figure \ref{fig:transcendental} shows both sides of \eqref{eq:firstcoef_equation} for $j=1,2,3$.
\begin{figure}[ht]
\centering
\includegraphics{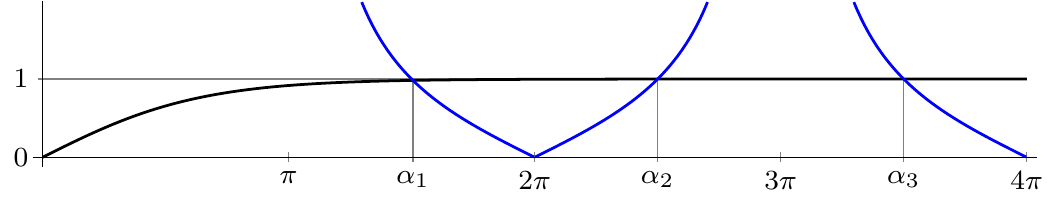}
\caption{The left-hand side (black) and the right-hand side (blue) of \eqref{eq:firstcoef_equation},
for $j=1$ on $(\pi,2\pi)$, for $j=2$ on $(2\pi,3\pi)$ and for $j=3$ on $(3\pi,4\pi)$.
\label{fig:transcendental}}
\end{figure}

For each $j$, the transcendental equation \eqref{eq:firstcoef_equation} is easy to solve by numerical methods.
Approximately,
\[
\al_1\approx 4.73004,\qquad
\al_2\approx 7.85320,\qquad
\al_3\approx 10.99561.
\]
It follows from \eqref{eq:firstcoef_equation}  that
$\al_j>\frac{(2j+1)\pi}{2}$ if $j$ is odd
and $\al_j<\frac{(2j+1)\pi}{2}$ if $j$ is even.
In particular,
\[
\al_1>\frac{3\pi}{2},\qquad
\al_2<\frac{5\pi}{2},\qquad
\al_3>\frac{7\pi}{2}.
\]%
We remark that differences between $\al_j$ and $(2j+1)\pi/2$ are extremely small:
\[
\al_1-\frac{3\pi}{2}\approx 1.8\cdot 10^{-2},\quad
\al_2-\frac{5\pi}{2}\approx -7.8\cdot 10^{-4},\quad
\al_3-\frac{7\pi}{2}\approx 3.3\cdot 10^{-5}.
\]%

Contrary to the general agreement of this paper,
the upper estimates of the residual terms
in the following proposition are not uniform in $j$.
Thus we use the notation $O_j$ instead of $O$.

\begin{prop}\label{prop:first_eigenvalues}
Let $g$ be the function defined by \eqref{eq:g}
and define $\ph_{n,j} \in (0,\pi)$ by $\la_{n,j}=g(\ph_{n,j})$.
Then for each fixed $j$ in $\bN$,
$\ph_{n,j}$ and $\la_{n,j}$ satisfy the asymptotic formulas
\begin{align}
\label{eq:first_ph_asymptotics}
\ph_{n,j}&=\frac{\al_j}{n+2}+O_j\left(\frac{1}{(n+2)^3}\right),
\\
\label{eq:first_eigenvalues_asymptotics}
\la_{n,j} &= \left(\frac{\al_j}{n+2}\right)^4 + O_j\left(\frac{1}{(n+2)^6}\right).
\end{align}
\end{prop}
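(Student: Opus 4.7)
The plan is to rescale by setting $z_n := (n+2)\ph_{n,j}$ and to show that the exact equation \eqref{eq:exact_equation_first_ph} becomes an $O(1/(n+2)^2)$ perturbation of the transcendental equation \eqref{eq:firstcoef_equation} defining $\al_j$. By Proposition~\ref{prop:existance_and_uniqueness}, $z_n \in (j\pi, (j+1)\pi)$. Starting from the Taylor expansions $f(x) = 1 - \tfrac{3}{8}x^2 + O(x^4)$ and $\be(x) = x - \tfrac{x^3}{12} + O(x^5)$ near zero and substituting $x = z/(n+2)$ with $z$ in a bounded set, one obtains $f(z/(n+2))^{(-1)^{j+1}} = 1 + O(1/(n+2)^2)$ and $(n+2)\be(z/(n+2))/2 = z/2 + O(1/(n+2)^2)$. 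Combining these with the $1$-Lipschitz property of $\tanh$ and plugging $z = z_n$ into \eqref{eq:exact_equation_first_ph} yields
\[
\Phi(z_n) \eqdef \tanh(z_n/2) - (-1)^j \tan(z_n/2) = O\!\left(\tfrac{1}{(n+2)^2}\right).
\]

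Before invoking the mean value theorem, I would confine $z_n$ to a compact subinterval of $(j\pi, (j+1)\pi)$ that avoids the pole of $\tan(z/2)$. For $n$ large, $\ph_{n,j} \le (j+1)\pi/(n+2)$ lies in a small neighborhood of $0$, so $f(\ph_{n,j})^{\pm 1}$ is bounded by a constant depending only on $j$; together with $|\tanh|<1$, this forces $|\tan(z_n/2)|$ to be uniformly bounded and hence $z_n$ bounded away from the pole at the appropriate endpoint of $(j\pi, (j+1)\pi)$. Since $\Phi(z_n) \to 0$, any cluster point of $(z_n)$ is a zero of $\Phi$ in the compact subinterval; by the uniqueness of $\al_j$ as such a zero, $z_n \to \al_j$. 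Writing $t \eqdef \tanh(\al_j/2) = (-1)^j \tan(\al_j/2)$, a direct calculation gives
\[
\Phi'(\al_j) = \tfrac{1}{2}(1 - t^2) - (-1)^j \tfrac{1}{2}(1 + t^2) = \begin{cases} 1, & j \text{ odd},\\ -t^2, & j \text{ even},\end{cases}
\]
which is nonzero in both cases (for $j$ even, $t \ne 0$ because $\al_j \ne 0$). The mean value theorem applied to $\Phi(z_n) - \Phi(\al_j)$, combined with $z_n \to \al_j$ and continuity of $\Phi'$, then delivers $z_n - \al_j = O(1/(n+2)^2)$, which rearranges to \eqref{eq:first_ph_asymptotics}.

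For \eqref{eq:first_eigenvalues_asymptotics}, the bound $\ph_{n,j} = O(1/(n+2))$ and the expansion $2\sin(\ph_{n,j}/2) = \ph_{n,j}(1 + O(\ph_{n,j}^2))$ give $g(\ph_{n,j}) = \ph_{n,j}^4 + O(1/(n+2)^6)$; expanding $\ph_{n,j}^4$ using \eqref{eq:first_ph_asymptotics} produces $\ph_{n,j}^4 = (\al_j/(n+2))^4 + O(1/(n+2)^6)$, as required. The main obstacle is the compactness step that keeps $z_n$ away from the singularity of $\tan(z/2)$; once this is secured, the rest reduces to routine Taylor expansions together with the nondegeneracy $\Phi'(\al_j) \ne 0$ extracted directly from the defining relation \eqref{eq:firstcoef_equation}.
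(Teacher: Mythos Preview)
Your proof is correct and follows essentially the same strategy as the paper's: rescale by $z_n=(n+2)\ph_{n,j}$, use the Taylor expansions of $f$ and $\be$ to reduce \eqref{eq:exact_equation_first_ph} to an $O(1/(n+2)^2)$ perturbation of \eqref{eq:firstcoef_equation}, and finish with the mean value theorem. The only minor technical difference is that you first establish $z_n\to\al_j$ via a compactness argument and then invoke the local nondegeneracy $\Phi'(\al_j)\ne 0$, whereas the paper bypasses the convergence step entirely by bounding the mean-value coefficient $\tan'(\xi_{2})+(-1)^{j-1}\tanh'(\xi_{1})>1/2$ globally on $(j\pi/2,(j+1)\pi/2)$.
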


\begin{proof}
Fix $j$ in $\bN$.
We are going to treat \eqref{eq:exact_equation_first_ph} by asymptotic methods, as $n$ tends to infinity.
Put
\[
\firstres_{n,j} \eqdef (n+2) \ph_{n,j} - \al_j,
\]
i.e., represent the product $(n+2)\ph_{n,j}$ in the form
\[
(n+2)\ph_{n,j}=\al_j+\firstres_{n,j}.
\]
It is easy to verify that, as $x\to 0$,
\[
f(x) = 1 + O(x^2),\qquad
\be(x) = x + O(x^3).
\]
Moreover, we know that
$\frac{j\pi}{n+2}\le\ph_{n,j}\le\frac{(j+1)\pi}{n+2}$ and thus $\ph_{n,j}=O_j(1/(n+2))$.
Therefore
\begin{align*}
& f(\ph_{n,j})=1+O_j\left(\frac{1}{(n+2)^2}\right),\qquad
\frac{1}{f(\ph_{n,j})}=1+O_j\left(\frac{1}{(n+2)^2}\right),\\
& \frac{(n+2)}{2}\be(\ph_{n,j})
=\frac{\al_j+\firstres_{n,j}}{2}+O_j\left(\frac{1}{(n+2)^2}\right),\\
& \tanh\frac{(n+2)}{2}\be(\ph_{n,j})
=\tanh\frac{\al_j+\firstres_{n,j}}{2}+O_j\left(\frac{1}{(n+2)^2}\right).
\end{align*}
By the mean value theorem, there exist some numbers $\xi_{1,n,j}$ and $\xi_{2,n,j}$
between $\al_j/2$ and $(\al_j+\firstres_{n,j})/2$ such that
\[
\tanh\frac{\al_j+\firstres_{n,j}}{2}-\tanh\frac{\al_j}{2}
=\tanh'(\xi_{1,n,j})\frac{\firstres_{n,j}}{2}
\]
and
\[
\tan\frac{\al_j+\firstres_{n,j}}{2}-\tan\frac{\al_j}{2}
=\tan'(\xi_{2,n,j})\frac{\firstres_{n,j}}{2}.
\]
After replacing $x$ by $\ph_{n,j}$, equation \eqref{eq:exact_equation_first_ph} takes the form
\begin{align*}
& \tanh\frac{\al_j}{2}
+\tanh'(\xi_{1,n,j})\frac{\firstres_{n,j}}{2}
+O_j\left(\frac{1}{(n+2)^2}\right)
\\
& =(-1)^j \left(\tan\frac{\al_j}{2}
+\tan'(\xi_{2,n,j})\frac{\firstres_{n,j}}{2}
+O_j\left(\frac{1}{(n+2)^2}\right)\right).
\end{align*}
Using the definition of $\al_j$, this can be simplified to
\[
\bigl(\tan'(\xi_{2,n,j})+(-1)^{j-1}\tanh'(\xi_{1,n,j})\bigr)\,\firstres_{n,j}
=O_j\left(\frac{1}{(n+2)^2}\right).
\]
The coefficient before $\firstres_{n,j}$ is strictly positive and bounded away from zero.
Indeed, for all $x$ from the considered domain $(j\pi/2,(j+1)\pi/2)$
we have $\tan'(x)>1$ and
\[
\tanh'(x)=\frac{1}{1+x^2}<\frac{1}{1+\frac{\pi^2}{4}}<\frac{1}{2},
\]
thus
\[
\tan'(\xi_{2,n,j})+(-1)^{j-1}\tanh'(\xi_{1,n,j})>\frac{1}{2}.
\]
Therefore $\firstres_{n,j}=O_j(1/(n+2)^2)$, which is equivalent to \eqref{eq:first_ph_asymptotics}.
The function $g$ has the following asymptotic expansion near the point $0$:
\begin{equation}\label{eq:g_near_zero}
g(x)=x^4+O(x^6).
\end{equation}
Using the formula $\la_{n,j}=g(\ph_{n,j})$
and combining \eqref{eq:first_ph_asymptotics} with \eqref{eq:g_near_zero},
we arrive at \eqref{eq:first_eigenvalues_asymptotics}.
\end{proof}

\medskip

\begin{numtest}\label{numtest:first_eigenvalue}
Denote by $\eps_{n,j}$ the absolute value of the residue in \eqref{eq:first_eigenvalues_asymptotics}:
\[
\eps_{n,j} \eqdef \left|\la_{n,j} - \left(\frac{\al_j}{n+2}\right)^4\right|.
\]
Similarly to Numerical test~\ref{numtest:inner_eigenvalues},
the exact eigenvalues $\la_{n,j}$ and the coefficients $\al_j$
are computed in high-precision arithmetic with $100$ decimal digits after the floating point.
The next table shows $\eps_{n,j}$ and $(n+2)^6 \eps_{n,j}$
corresponding to $j=1,2$ and to various values of $n$.
\[
\begin{array}{c|c|c|c|c|c}
\medstrut &
n=64 &
n=256 &
n=1024 &
n=4096 &
n=16384
\\\hline
\medstrut
\eps_{n,1} &
6.3\cdot 10^{-9} &
1.8\cdot 10^{-11} &
4.5\cdot 10^{-16} &
1.1\cdot 10^{-19} &
2.7\cdot 10^{-23}
\\\hline
\medstrut
(n+2)^6 \eps_{n,1} &
523.37 &
524.39 &
524.46 &
524.46 &
524.46
\\\hline
\medstrut
\eps_{n,2} &
1.1\cdot 10^{-7} &
3.1\cdot 10^{-11} &
7.9\cdot 10^{-15} &
2.0\cdot 10^{-18} &
4.9\cdot 10^{-22}
\\\hline
\medstrut
(n+2)^6 \eps_{n,2} &
9315.7 &
9266.9 &
9263.7 &
9263.5 &
9263.4
\end{array}
\]
Moreover, numerical experiments show that
\[
\max_{1\le n\le 100000} ((n+2)^6 \eps_{n,1}) < 524.47.
\]
\end{numtest}

\begin{rem}
Notice that formula (2.7) from \cite{BG2017pentadiagonal}
does not have the form~\eqref{eq:first_ph_asymptotics}
because the numerator $u_{1,j}$ in this formula depends on $n$ in a complicated manner.
\end{rem}

\begin{rem}
Proposition~\ref{prop:first_eigenvalues} has trivial corollaries
about the norm of the inverse matrix and the condition number:
\[
\|T_n^{-1}(g)\|_2 \sim \left(\frac{n+2}{\al_1}\right)^4,\quad
\operatorname{cond}_2(T_n(g)) \sim 16 \left(\frac{n+2}{\al_1}\right)^4,
\quad\text{as}\quad n\to\nf.
\]
\end{rem}

\begin{rem}
Proposition~\ref{prop:first_eigenvalues} is not terribly new. Parter~\cite{Parter1, Parter2} showed
that if $g_m$ is given by~\eqref{g_m}, then
\begin{equation}
\la_{n,j}=\frac{\gamma_j(m)}{(n+2)^{2m}}+o\left(\frac{1}{(n+2)^{2m}}\right)\;\;\mbox{as}\;\: n \to \infty\label{Pa}
\end{equation}
with some constant $\gamma_j(m)$ for each fixed $j$. Our proposition identifies $\gamma_1(2)$ as $\alpha_1^4$
and improves the $o(1/(n+2)^4)$ to $O(1/(n+2)^6)$.
Parter also had explicit formulas for $\gamma_j(2)$ in terms of the solutions of certain transcendental equations.
Widom~\cite{Widom1, Widom2} derived results like~\eqref{Pa}
by replacing matrices by integral operators with piecewise constant kernels and
subsequently proving the convergence of the appropriately scaled integral operators. Widom's approach
delivered the constants $\gamma_j(m)$ as the reciprocals of the eigenvalues of certain integral operators.
More about these pioneering works can be found in~\cite[pp.~256--259]{BG2005book} and in~\cite{BoWi}.
The proof of Proposition~\ref{prop:first_eigenvalues} given above is different from the ones by Parter and Widom.
\end{rem}

\begin{rem}\label{rem:n_plus_two_is_natural}
If we pass to the denominator $n+1$ in formula \eqref{eq:first_eigenvalues_asymptotics},
then it becomes more complicated:

\vspace{-5mm}
\[
\la_{n,j} = \frac{\al_j^4}{(n+1)^4}-\frac{4\al_j^4}{(n+1)^5}+O_j\left(\frac{1}{(n+2)^6}\right).
\]
This reveals that the denominator $n+2$ is objectively better when studying
the asymptotic behavior of the first eigenvalues in the example \eqref{g2}.
\end{rem}

\section{The regular four term asymptotic expansion for the example}

\begin{lem}\label{lem:false_rhs}
Let $g(x)=\left(2\sin\frac{x}{2}\right)^4$
and let $d_0,\ldots,d_4$ be the same functions as in Proposition~\ref{prop:inner_eigenvalues}.
Then, as $n \to \infty$, we have the asymptotic expansions
\begin{align}
\sum_{k=0}^3 \frac{d_k\left(\frac{j\pi}{n+2}\right)}{(n+2)^k}
=\frac{(j\pi+\eta(0))^4-\eta(0)^4}{(n+2)^4}+O\left(\frac{j^4}{(n+2)^5}\right),
\label{eq:four_term_rhs}
\\
\sum_{k=0}^4 \frac{d_k\left(\frac{j\pi}{n+2}\right)}{(n+2)^k}
=\left(\frac{j\pi+\eta(0)}{n+2}\right)^4+O\left(\frac{j^4}{(n+2)^5}\right),
\label{eq:false_rhs}
\end{align}
uniformly in $j$.
\end{lem}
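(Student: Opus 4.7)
The plan is to evaluate each partial sum via the identity
\[
\sum_{k=0}^p \frac{d_k(u)}{(n+2)^k}
= g\!\left(u + \sum_{k=1}^p \frac{c_k(u)}{(n+2)^k}\right) + O\!\left(\frac{1}{(n+2)^{p+1}}\right),
\]
valid uniformly for $u\in[0,\pi]$. This identity is immediate from the construction of the $d_k$ in the proof of Proposition~\ref{prop:regular_asymptotics_from_main_equation}: the $d_k(u)$ are precisely the Taylor coefficients, in powers of $h\eqdef 1/(n+2)$, of $g\bigl(u + \sum_{k\ge 1} c_k(u) h^k\bigr)$, so the difference is only the Taylor tail.

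The crux is to Taylor-expand the $c_k$'s around $u=0$, exploiting the parity of $\eta$. Because $f(x)=\cos(x/2)/(1+\sin^2(x/2))$ is even, $\eta(x)=2\arctan(1/f(x))$ is even; in particular $\eta(0)=\pi/2$ and $\eta'(0)=\eta'''(0)=0$. Setting $v\eqdef\eta(0)$ and $\eta_2\eqdef\eta''(0)/2=1/4$, the formulas~\eqref{eq:c_formulas} give
\[
c_1(u)=v+\eta_2 u^2+O(u^4),\ c_2(u)=2v\eta_2\, u+O(u^3),\ c_3(u)=v^2\eta_2+O(u^2),\ c_4(u)=O(u),
\]
since $c_3(0)=\tfrac12\eta(0)^2\eta''(0)=v^2\eta_2$ and every monomial in $c_4$ contains either $\eta'$ or $\eta'''$.

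Substituting $u=j\pi h$ into $\Psi_4(u,h)\eqdef u + \sum_{k=1}^4 c_k(u)h^k$ and collecting powers of $h$, the $h^2$-coefficient vanishes (as $c_1'(0)=0$ and $c_2(0)=0$), while the $h^3$-coefficient collapses into the perfect square
\[
\tfrac12 c_1''(0)(j\pi)^2 + c_2'(0)(j\pi) + c_3(0) = \eta_2(j\pi)^2 + 2v\eta_2(j\pi) + v^2\eta_2 = \eta_2(j\pi+v)^2,
\]
so $\Psi_4(j\pi h,h)=(j\pi+v)h + \eta_2(j\pi+v)^2 h^3 + O(j^4 h^5)$, the $O$ obtained after using $jh\le 1$ to absorb the higher-order tail. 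Since $g(x)=x^4-x^6/6+O(x^8)$ near $0$, raising $\Psi_4$ to the fourth power and repeatedly invoking $jh\le 1$ to convert $O(j^{4+r}h^{5+r})$ into $O(j^4 h^5)$ yields $g(\Psi_4)=(j\pi+v)^4 h^4+O(j^4 h^5)$; the dominant cross-term $4\eta_2(j\pi+v)^5 h^6$ is exactly the one whose absorption relies on the perfect-square identity. Combined with the uniform residue $O(h^5)$, this proves~\eqref{eq:false_rhs}.

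For the three-term case, write $\sum_{k=0}^3 d_k(u) h^k = \sum_{k=0}^4 d_k(u) h^k - d_4(u) h^4$. The vanishing $g^{(m)}(0)=0$ for $m=1,2,3$ and $g^{(4)}(0)=24$ collapse~\eqref{eq:d_formulas} at $u=0$ to $d_4(0)=\tfrac{1}{24}g^{(4)}(0) c_1(0)^4 = v^4$, and parity (both $g$ and $\eta$ are even, hence so is $d_4$) gives $d_4(u)=v^4+O(u^2)$, so $d_4(j\pi h) h^4 = v^4 h^4+O(j^2 h^6)$, which is absorbed into $O(j^4 h^5)$. Subtracting from~\eqref{eq:false_rhs} produces~\eqref{eq:four_term_rhs}. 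The main obstacle is the algebraic coincidence that the $h^3$-coefficient of $\Psi_4(j\pi h,h)$ is exactly $\eta_2(j\pi+v)^2$; without this perfect-square structure, the $h^6$-cross term in $\Psi_4^4$ would not align with the target $4$th power $((j\pi+v)h)^4$, and the simple closed form on the right-hand sides of \eqref{eq:four_term_rhs} and \eqref{eq:false_rhs} would not be visible.
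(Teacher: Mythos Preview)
Your argument is correct, but it takes a genuinely different route from the paper's. The paper never invokes the reconstruction identity $\sum_{k=0}^p d_k(u)h^k = g\bigl(u+\sum_{k=1}^p c_k(u)h^k\bigr)+O(h^{p+1})$; instead it expands each $d_k(x)$ \emph{individually} near $x=0$ using the formulas~\eqref{eq:d_formulas} together with the behavior $g^{(m)}(x)\sim \tfrac{4!}{(4-m)!}x^{4-m}$ near zero. All the paper needs is $c_1(x)=\eta(0)+O(x)$ and the boundedness of $c_2,c_3,c_4$; this already yields $d_k(x)=\binom{4}{k}\eta(0)^k x^{4-k}+O(x^{5-k})$ for $k=0,\ldots,4$, and the binomial theorem finishes the job. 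No parity of $\eta$, no value of $\eta''(0)$, no perfect-square identity is required.

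Your route---push everything through $g(\Psi_4)$ and expand $\Psi_4$ in $h$ using the parity of $\eta$---also works, but it is doing considerably more than necessary. In particular, your closing remark that the perfect-square structure of the $h^3$-coefficient is ``the main obstacle'' overstates its role. Any coefficient of the form $O(j^2)$ at order $h^3$ would give a cross-term $4(j\pi+v)^3\cdot O(j^2)\,h^6=O(j^5 h^6)$, which is absorbed into $O(j^4h^5)$ via $jh\le1$ just as you do; the leading term $(j\pi+v)^4h^4$ comes solely from the $h^1$-coefficient of $\Psi_4$ and $g(x)\sim x^4$, and would be unchanged even if the $h^3$-coefficient were an arbitrary quadratic in $j$. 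What your approach buys is a systematic mechanism that could in principle be pushed to higher orders; what the paper's approach buys is brevity and minimal hypotheses on $\eta$.
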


\begin{proof}
By \eqref{eq:g_derivatives},
the function $g$ and its derivatives admit the following asymptotic expansions near the point $0$:
\begin{equation}\label{eq:g_and_its_derivatives_near_zero}
\begin{aligned}
&g(x)=x^4+O(x^6),\quad
g'(x)=4x^3+O(x^5),\quad
g''(x)=12x^2+O(x^4),\\
&g'''(x)=24x+O(x^3),\quad
g^{(4)}(x)=24+O(x^2)\qquad
(x\to0).
\end{aligned}
\end{equation}
Applying \eqref{eq:eta_derivatives} and taking into account that $\eta$ is smooth, we see that
\begin{equation}\label{eq:c0_c1_near_zero}
c_0(x)=x,\qquad c_1(x)=\eta(x)=\eta(0)+O(x)\qquad(x\to0)
\end{equation}
and that the functions $c_2,c_3,c_4$ are bounded.
Substituting \eqref{eq:g_and_its_derivatives_near_zero} and \eqref{eq:c0_c1_near_zero}
into the formulas \eqref{eq:d_formulas}, we get the following
expansions of $d_0(x),\ldots,d_4(x)$, as $x\to 0$:
\[
\begin{aligned}
d_0(x)&=x^4+O(x^6),\quad
d_1(x)=g'(x)c_1(x)=4x^3\eta(0)+O(x^4),\\
d_2(x)&=\frac{1}{2}g''(x)c_1^2(x)+O(x^3)=6x^2\eta^2(0)+O(x^3),\\
d_3(x)&=\frac{1}{6}g'''(x)c_1^3(x)+O(x^2)=4x\eta^3(0)+O(x^2),\\
d_4(x)&=\frac{1}{24}g^{(4)}(x)c_1^4(x)+O(x)=\eta^4(0)+O(x).
\end{aligned}
\]
Using these formulas and the binomial theorem, we arrive at \eqref{eq:false_rhs}.
Moving in \eqref{eq:false_rhs} the summand with $k=4$ to the right-hand side
we obtain \eqref{eq:four_term_rhs}.
\end{proof}

The following proposition proves Theorem~\ref{Theo 1.2}(c).

\begin{prop}\label{prop:regular_asymptotics_of_the_third_order}
Let $g(x)=\left(2\sin\frac{x}{2}\right)^4$
and $d_0,\ldots,d_3\colon[0,\pi]\to\bR$ be the functions
from the proof of Proposition~\ref{prop:inner_eigenvalues}.
Then there exists a $C >0$ such that
\begin{equation}\label{eq:regular_asymptotics_of_the_third_order}
\left|\la_{n,j}-\sum_{k=0}^3 \frac{d_k\left(\frac{j\pi}{n+2}\right)}{(n+2)^k}\right| \le \frac{C}{(n+2)^4}
\end{equation}
for all $n \in \bN$ and all $j \in \{1,\ldots,n\}$.
\end{prop}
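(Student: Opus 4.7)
The plan is to split the index set $\{1,\ldots,n\}$ at the threshold $L_n\eqdef\lceil \tfrac{3}{2}\log(n+2)\rceil$. For the bulk range $L_n\le j\le n$, Proposition~\ref{prop:inner_eigenvalues} with $p=3$ applies directly and yields \eqref{eq:regular_asymptotics_of_the_third_order} at once. All remaining work concerns the short initial range $1\le j<L_n$, in which $j=O(\log(n+2))$ and the ``inner'' machinery of Section~\ref{sec:inner_eigenvalues} is no longer available.

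On this initial range I compare $\la_{n,j}$ and the partial sum through a common small-$x$ expansion. Lemma~\ref{lem:false_rhs} gives, uniformly in $j$,
\[
\sum_{k=0}^3 \frac{d_k(j\pi/(n+2))}{(n+2)^k} = \frac{(j\pi+\eta(0))^4-\eta(0)^4}{(n+2)^4}+O\!\left(\frac{j^4}{(n+2)^5}\right),
\]
and for $j\le L_n$ the error collapses to $o(1/(n+2)^4)$. The complementary ingredient I need is a \emph{uniform} version of Proposition~\ref{prop:first_eigenvalues}, namely
\[
\la_{n,j}=\left(\frac{\al_j}{n+2}\right)^4+O\!\left(\frac{j^6}{(n+2)^6}\right)\qquad(1\le j\le L_n),
\]
whose residue is also $o(1/(n+2)^4)$ on this range. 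Subtracting, the claim reduces to showing that $\bigl|\al_j^4-(j\pi+\eta(0))^4+\eta(0)^4\bigr|$ is bounded uniformly in $j\ge 1$. Since $\eta(0)=2\arctan(1)=\pi/2$, this amounts to controlling $\al_j-(j\pi+\pi/2)$, which I will show decays exponentially: from $\al_j\ge j\pi$ we have $1-\tanh(\al_j/2)=O(e^{-j\pi})$, so the transcendental equation \eqref{eq:firstcoef_equation} gives $(-1)^j\tan(\al_j/2)=1+O(e^{-j\pi})$, and a local inversion of $\tan$ around the appropriate odd multiple of $\pi/4$ yields $\al_j=(2j+1)\pi/2+O(e^{-j\pi})$. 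Consequently $\al_j^4-(j\pi+\eta(0))^4=O(j^3 e^{-j\pi})$ is bounded, and the constant $\eta(0)^4=(\pi/2)^4$ absorbs the rest.

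The main obstacle is producing the uniform version of Proposition~\ref{prop:first_eigenvalues}. The proof there is stated for fixed $j$ with $O_j$-notation, but each implicit constant arises from Taylor expansions of the analytic functions $f$, $\be$, $\tan$, $\tanh$ at points of size $O(j/(n+2))$ or $O(j)$ and admits polynomial-in-$j$ bookkeeping. One reruns the mean-value-theorem step, replacing every $O_j(1/(n+2)^2)$ by an explicit $O(j^k/(n+2)^2)$, and invokes the paper's already-uniform lower bound $\tan'(\xi_{2,n,j})+(-1)^{j-1}\tanh'(\xi_{1,n,j})>\tfrac12$. This delivers $(n+2)\ph_{n,j}-\al_j = O(j^3/(n+2)^2)$, and applying $g(x)=x^4+O(x^6)$ produces the claimed $O(j^6/(n+2)^6)$ residue. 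No new idea is required, only careful bookkeeping, after which the two cases assemble directly into \eqref{eq:regular_asymptotics_of_the_third_order}.
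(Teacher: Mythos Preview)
Your proposal is correct, but it takes a different route on the initial range $1\le j<L_n$ than the paper does. Both arguments split at a logarithmic threshold and invoke Proposition~\ref{prop:inner_eigenvalues} for the bulk. For small $j$, however, the paper never passes through the numbers $\al_j$: it reads directly from the main equation~\eqref{eq:main_equation_pentadiagonal}, the bound~\eqref{eq:rho_upper_estimate_j_small}, and the smoothness of $\eta$ that
\[
\ph_{n,j}=\frac{j\pi+\eta(0)}{n+2}+O\!\left(\frac{j}{(n+2)^2}\right)+O\!\left(\frac{e^{-2j}}{n+2}\right),
\]
then expands $\la_{n,j}=\ph_{n,j}^4+O(\ph_{n,j}^6)$ by the multinomial theorem to obtain $\la_{n,j}=\bigl((j\pi+\eta(0))/(n+2)\bigr)^4+O(1/(n+2)^4)$, which is compared with~\eqref{eq:four_term_rhs} exactly as you do.

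Your detour---uniformizing Proposition~\ref{prop:first_eigenvalues} to get $\la_{n,j}=(\al_j/(n+2))^4+O(j^6/(n+2)^6)$ and then proving $\al_j=(2j+1)\pi/2+O(e^{-j\pi})$ so that $\al_j^4-(j\pi+\eta(0))^4=O(j^3e^{-j\pi})$ is bounded---works and is internally consistent; the lower bound on $\tan'(\xi_{2,n,j})+(-1)^{j-1}\tanh'(\xi_{1,n,j})$ is indeed uniform in $j$, and the Taylor remainders for $f$ and $\be$ carry explicit polynomial powers of $j$ as you claim. The paper's argument is shorter because it avoids both the uniformization bookkeeping and the separate analysis of $\al_j$; your route, on the other hand, produces as a byproduct the exponentially small bound on $\al_j-(2j+1)\pi/2$, which the paper only mentions numerically.
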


\begin{proof}
Thanks to Proposition~\ref{prop:inner_eigenvalues}
we are left with the case $j<2\log(n+2)$.
Using \eqref{eq:main_equation_pentadiagonal},
the upper estimate \eqref{eq:rho_upper_estimate_j_small},
and the smoothness of $\eta$, we conclude that
\begin{equation}\label{eq:ph_first_rough}
\ph_{n,j}=\frac{j\pi+\eta(0)}{n+2}+O\left(\frac{j}{(n+2)^2}\right)
+O\left(\frac{e^{-2j}}{n+2}\right).
\end{equation}
From \eqref{eq:g_near_zero} we therefore obtain that
\[
\la_{n,j}=g(\ph_{n,j})
=\ph_{n,j}^4+O\left(\frac{(\log(n+2))^6}{(n+2)^6}\right)
=\ph_{n,j}^4+O\left(\frac{1}{(n+2)^4}\right).
\]
Expanding $\ph_{n,j}^4$ by the multinomial theorem and separating the main term, we get
\[
\ph_{n,j}^4
= \left(\frac{j\pi+\eta(0)}{n+2}\right)^4
+ \sum_{\substack{p,q,r\ge 0\\p+q+r=4\\p<4}}
O\left(\frac{(j\pi+\eta(0))^p j^q\,e^{-2jr}}{(n+2)^{p+2q+r}}\right).
\]
The sum over $p,q,r$ can be divided into the part with $q>0$
and the part with $q=0$ and estimated by
\[
\sum_{\substack{p,q,r\ge 0\\p+q+r=4\\q>0}}
O\left(\frac{(j\pi+\eta(0))^p j^q}{(n+2)^{4+q}}\right)
+ \sum_{\substack{p,r\ge 0\\p+r=4\\r>0}}
O\left(\frac{(j\pi+\eta(0))^p e^{-2jr}}{(n+2)^4}\right)
=O\left(\frac{1}{(n+2)^4}\right).
\]
Thus, the true asymptotic expansion of $\la_{n,j}$
under the condition $j<2\log(n+2)$ is
\begin{equation}\label{eq:small_eigenvalues_true_asymptotic_expansion_3}
\la_{n,j}= \left(\frac{j\pi+\eta(0)}{n+2}\right)^4+O\left(\frac{1}{(n+2)^4}\right).
\end{equation}
On the other hand, using~\eqref{eq:four_term_rhs}
and the fact that $j^4=O(n+2)$, we get
\begin{equation}\label{eq:rhs_3}
\sum_{k=0}^3 \frac{d_k(u_{n,j})}{(n+2)^k}
=\left(\frac{j\pi+\eta(0)}{n+2}\right)^4+O\left(\frac{1}{(n+2)^4}\right).
\end{equation}
Comparing \eqref{eq:small_eigenvalues_true_asymptotic_expansion_3}
and \eqref{eq:rhs_3}, we obtain the required result.
\end{proof}

\begin{rem}\label{rem:approximation_is_bad}
Let us again embark on the case $p=3$ and thus on Theorem~\ref{Theo 1.2}(b). 
The approximation of the first eigenvalues $\la_{n,j}$ by $\sum_{k=0}^3\frac{d_k(u_{n,j})}{(n+2)^k}$
is bad in the sense that the absolute error of this approximation
is of the same order $O_j(1/(n+2)^4)$ as the eigenvalue $\la_{n,j}$ which we want to approximate!
To state it in different terms, for each fixed $j$, the residues
\[
\omega_{n,j} := \la_{n,j}-\sum_{k=0}^3 \frac{d_k(\frac{j\pi}{n+2})}{(n+2)^k}
\]
decay at the same rate $O_j(1/(n+2)^4)$
as the eigenvalues $\la_{n,j}$ and the distances between them,
and the corresponding relative errors do not tend to zero:
\begin{equation}\label{rel:errors}
\begin{aligned}
&\frac{\omega_{n,j}}{\la_{n,j}}\to \frac{\al_j^4+\eta(0)^4-(j\pi+\eta(0))^4}{\al_j^4}\ne 0,\\
&\frac{\omega_{n,j}}{\la_{n,j+1}-\la_{n,j}}\to \frac{\al_j^4+\eta(0)^4-(j\pi+\eta(0))^4}{\al_{j+1}^4-\al_j^4}\ne 0.
\end{aligned}
\tag{REL}
\end{equation}
Compared to this, the residues of the asymptotic expansions for simple-loop symbols
(see~\cite{BBGM2015simpleloop,BGM2017relaxed}) can be bounded by
$o\left(\frac{j\,(n+1-j)}{n^2}\,\frac{1}{n^p}\right)$,
where $p$ is related with the smoothness of the symbols,
and the expression $\frac{j\,(n+1-j)}{n^2}$ is in the simple-loop case
always comparable with the distance $\la_{n,j+1}-\la_{n,j}$ between the consecutive eigenvalues,
i.e., there exist $C_1>0$ and $C_2>0$ such that
\[
C_1\,\frac{j\,(n+1-j)}{n^2} \le \la_{n,j+1} - \la_{n,j} \le C_2\,\frac{j\,(n+1-j)}{n^2}.
\]
Clearly, the quotient $\frac{|\omega_{n,j}|}{\la_{n,j+1}-\la_{n,j}}$
is a more adequate measure of the quality of the approximation
than just the absolute error $|\omega_{n,j}|$.
\end{rem}

\begin{numtest}
Denote by $\Delta_n$ the maximal error in \eqref{eq:regular_asymptotics_of_the_third_order}:
\[
\Delta_n \eqdef \max_{1\le j\le n}
\left|\la_{n,j}-\sum_{k=0}^{3} \frac{d_k(u_{n,j})}{(n+2)^k}\right|.
\]
The following table shows $\Delta_n$ and $(n+2)^4 \Delta_n$
for various values of $n$.
\[
\begin{array}{c|c|c|c|c|c}
\medstrut &
n=64 &
n=256 &
n=1024 &
n=4096 &
n=16384
\\\hline
\medstrut
\Delta_n &
7.6\cdot 10^{-6} &
3.2\cdot 10^{-8} &
1.3\cdot 10^{-10} &
5.1\cdot 10^{-13} &
2.0\cdot 10^{-15}
\\\hline
\medstrut
(n+2)^4 \Delta_n &
143.97 &
143.05 &
142.81 &
142.75 &
142.74
\end{array}
\]
According to this table, the numbers $\Delta_n$ really behave like $O(1/(n+2)^4)$.
\end{numtest}

\section{There is no regular five term asymptotic expansion for the example}
\label{sec:contradiction}

As said, Ekstr\"{o}m, Garoni, and Serra-Capizzano \cite{EGS2017areknown} conjectured
that for every infinitely smooth $2\pi$-periodic real-valued even function $g$,
strictly increasing on $[0,\pi]$,
the eigenvalues $\la_{n,j}$ of the corresponding Toeplitz matrices
admit an asymptotic expansion
of the regular form \eqref{basic} for every order $p$.

We now show that for the symbol $g(x)=\left(2\sin\frac{x}{2}\right)^4$
an asymptotic expansion
of the form \eqref{basic} cannot be true for $p=4$.
This disproves Conjecture~1 from \cite{EGS2017areknown}.

\begin{prop}\label{prop:no_regular_with_n_plus_two}
Let $g(x)=\left(2\sin\frac{x}{2}\right)^4$.
Denote by $\la_{n,1},\ldots,\la_{n,n}$
the eigenvalues of the Toeplitz matrices $T_n(g)$, written in the ascending order.
Then there do not exist continuous functions
$d_0,\ldots,d_4\colon[0,\pi]\to\bR$ and numbers $C>0$, $N\in\bN$,
such that for every $n\ge N$ and every $j \in \{1,\ldots,n\}$
\begin{equation}\label{eq:expansion_4}
\left|\la_{n,j}-\sum_{k=0}^4 \frac{d_k\left(\frac{j\pi}{n+2}\right)}{(n+2)^k}\right| \le \frac{C}{(n+2)^5}.
\end{equation}
\end{prop}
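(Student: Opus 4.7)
The plan is a proof by contradiction that pits the inner-eigenvalue asymptotics (from Proposition~\ref{prop:inner_eigenvalues}) against the boundary-layer asymptotics of Proposition~\ref{prop:first_eigenvalues} for the very first eigenvalue.

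First, I assume that continuous functions $d_0,\ldots,d_4\colon[0,\pi]\to\bR$ and constants $C>0$, $N\in\bN$ exist making \eqref{eq:expansion_4} valid for all $n\ge N$ and all $j\in\{1,\ldots,n\}$. Proposition~\ref{prop:inner_eigenvalues} (with $p=4$) supplies infinitely smooth coefficients $\widetilde d_0,\ldots,\widetilde d_4$ on $[0,\pi]$ satisfying the same kind of bound on the index set $J \eqdef \{(n,j) : n\ge 1,\ 2\log(n+2)\le j\le n\}$. This $J$ asymptotically fills $[0,\pi]$ by quotients: given $x\in[0,\pi]$, set $j_n=\max(\lceil 2\log(n+2)\rceil,\lceil (n+2)x/\pi\rceil)$; then $(n,j_n)\in J$ for all $n$ large enough and $u_{n,j_n}\to x$. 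Proposition~\ref{prop:uniqueness} thus forces $d_k=\widetilde d_k$ on $[0,\pi]$ for $k=0,\ldots,4$, so the $d_k$ in \eqref{eq:expansion_4} are precisely the ones to which Lemma~\ref{lem:false_rhs} applies.

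Next, I specialize \eqref{eq:expansion_4} to $j=1$. Proposition~\ref{prop:first_eigenvalues} gives
\[
\la_{n,1}=\left(\frac{\al_1}{n+2}\right)^{\!4}+O\!\left(\frac{1}{(n+2)^6}\right),
\]
while Lemma~\ref{lem:false_rhs}, combined with $\eta(0)=2\arctan(1/f(0))=2\arctan 1=\pi/2$, yields
\[
\sum_{k=0}^4 \frac{d_k(\pi/(n+2))}{(n+2)^k}=\left(\frac{3\pi/2}{n+2}\right)^{\!4}+O\!\left(\frac{1}{(n+2)^5}\right).
\]
Subtracting, \eqref{eq:expansion_4} at $j=1$ would demand
\[
\frac{\al_1^4-(3\pi/2)^4}{(n+2)^4}=O\!\left(\frac{1}{(n+2)^5}\right),
\]
whence $\al_1=3\pi/2$, both being positive. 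However, Section~\ref{sec:first_eigenvalues} already records $\al_1>3\pi/2$ (a direct consequence of \eqref{eq:firstcoef_equation} for odd $j$: substituting $\al_1=3\pi/2$ gives $\tanh(3\pi/4)<1=-\tan(3\pi/4)$, so $3\pi/2$ cannot solve the transcendental equation). This contradiction proves the proposition.

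The step I expect to require the most care is the uniqueness reduction: one must confirm that the hypothetical coefficient functions are forced to agree with those from the inner regime throughout the whole interval $[0,\pi]$, right up to the endpoint $x=0$ near which the counterexample lives. Once that identification is in hand, the contradiction is a clean collision between the two universal constants $\al_1$ and $\pi+\eta(0)=3\pi/2$ produced by the two distinct asymptotic regimes.
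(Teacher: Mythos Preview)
Your proof is correct and follows essentially the same route as the paper: assume the expansion, use Proposition~\ref{prop:uniqueness} on the index set $J=\{(n,j):2\log(n+2)\le j\le n\}$ to identify the hypothetical $d_k$ with the smooth coefficients from Proposition~\ref{prop:inner_eigenvalues}, then invoke Lemma~\ref{lem:false_rhs} at $j=1$ and collide the resulting constant $(3\pi/2)^4$ with $\al_1^4$ from Proposition~\ref{prop:first_eigenvalues}. The only cosmetic glitch is that your explicit choice $j_n=\max(\lceil 2\log(n+2)\rceil,\lceil (n+2)x/\pi\rceil)$ overshoots $n$ when $x=\pi$; taking $j_n=\min(n,\ldots)$ there fixes it immediately.
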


\begin{proof}
Reasoning by contradiction,
assume that there exist functions $d_0,\ldots,d_4$
and numbers $C$ and $N$ with the required properties.
Put
\[
J = \bigl\{(n,j)\in\bN^2\colon\ n\ge N,\ 2\log(n+2) \le j\le n\bigr\}.
\]
Clearly, this set $J$ asymptotically fills $[0,\pi]$ by quotients.

So, by Proposition~\ref{prop:uniqueness}, the functions $d_0,\ldots,d_4$ from \eqref{eq:expansion_4}
must be the same as the functions $d_0,\ldots,d_4$ from Proposition~\ref{prop:inner_eigenvalues}.
In other words, the asymptotic expansion \eqref{eq:la_expansion_pentadiagonal_far_from_0}
from Proposition~\ref{prop:inner_eigenvalues} holds for every pair $(n,j)$
with $n$ large enough and $j$ in $\{1,\ldots,n\}$, that is, without the restriction $j\ge 2\log(n+2)$.

Combining \eqref{eq:expansion_4} with \eqref{eq:false_rhs},
we see that for each fixed $j$ the eigenvalue $\la_{n,j}$ must have the asymptotic behavior
\begin{equation}\label{eq:conjectured_asymptotic_first_eigenvalues}
\la_{n,j}=\left(\frac{j\pi+\eta(0)}{n+2}\right)^4+O_j\left(\frac{1}{(n+2)^5}\right).
\end{equation}
Since $\eta(0)=2\arctan(1)=\frac{\pi}{2}$, we obtain for $j=1$ that
\begin{equation}\label{eq:conjectured_asymptotic_first_eigenvalue}
\la_{n,1}=\left(\frac{3\pi/2}{n+2}\right)^4+O\left(\frac{1}{(n+2)^5}\right),
\end{equation}
which contradicts Proposition~\ref{prop:first_eigenvalues} because $3\pi/2 \ne \al_1$.
\end{proof}

\begin{rem} \label{rem:another_end_of_the_proof}
Here is an alternative way to finish the proof of Proposition~\ref{prop:no_regular_with_n_plus_two}.
After having formula \eqref{eq:conjectured_asymptotic_first_eigenvalues},
we obtain the following hypothetical asymptotic relation between two first eigenvalues:
\[
\lim_{n\to\nf}\left((n+2) \left(\la_{n,2}^{1/4}-\la_{n,1}^{1/4}\right)\right)
=(2\pi+\eta(0))-(\pi+\eta(0))=\pi.
\]
But this contradicts Proposition~\ref{prop:first_eigenvalues}, according to which
\[
\lim_{n\to\nf}\left((n+2) \left(\la_{n,2}^{1/4}-\la_{n,1}^{1/4}\right)\right)=\al_2-\al_1<\pi.
\]
In this reasoning we do not use the value $\eta(0)$.
\end{rem}

\noindent
{\bf Proof of Theorem 1.2.} The existence of the asymptotic expansions~\eqref{12aaa} follows from
Proposition~\ref{prop:inner_eigenvalues}, its uniqueness is a consequence of Proposition~\ref{prop:uniqueness},
formula~\eqref{12ccc} was established in Proposition~\ref{prop:regular_asymptotics_of_the_third_order},
and the impossibility of~\eqref{12bbb} is just
Proposition~\ref{prop:no_regular_with_n_plus_two}. \hfill $\square$

\begin{rem}
Note that Proposition~\ref{prop:no_regular_with_n_plus_two} is actually stronger than the second part of
Theorem ~1.2. Namely, Theorem~1.2 states that~\eqref{12bbb} cannot hold with the functions $d_1, \ldots, d_4$
appearing in~\eqref{12aaa}. Proposition~\ref{prop:no_regular_with_n_plus_two} tells us that~\eqref{12bbb}
is also impossible for any other choice of continuous functions $d_1, \ldots, d_4$. The reason is of course
Proposition~\ref{prop:uniqueness}.
\end{rem}

\noindent
{\bf Proof of Theorem 1.1.}
The functions $d_0,d_1,\ldots$ from Proposition~\ref{prop:inner_eigenvalues}
are infinitely smooth on $[0,\pi]$,
and thus, by Remark~\ref{rem:change_denominator},
the expansion \eqref{eq:la_expansion_pentadiagonal_far_from_0} with $p=4$
can be rewritten in the form \eqref{eq:expansion_4_n_plus_one}
with some infinitely smooth functions $f_0,\ldots,f_4$.
So, \eqref{eq:expansion_4_n_plus_one}
is true for all $(n,j)$ satisfying that $2\log(n+2)\le j\le n$.

Contrary to what we want, assume that there are $f_0,\ldots,f_4$, $C$, and $N$
as in the statement of Theorem~\ref{thm:no_regular_with_n_plus_one}.
Then, by Proposition~\ref{prop:uniqueness}, the functions $f_0,\ldots,f_4$
are the same as those in the previous paragraph.
In particular, $f_0,\ldots,f_4$ must be infinitely smooth.
In this case, the asymptotic expansion \eqref{eq:expansion_4_n_plus_one}
can be rewritten in powers of $1/(n+2)$
and is true for all $n$ and $j$ with $n\ge N$ and $1\le j\le n$.
This contradicts Proposition~\ref{prop:no_regular_with_n_plus_two}. \hfill $\square$

\medskip
We conclude with a conjecture about
the eigenvalues of Toeplitz matrices generated by \eqref{g_m}.

\begin{conj}
Let $g_m(x)=\left(2\sin\frac{x}{2}\right)^{2m}$ with an integer $m \ge 3$. If $p \le 2m-1$, there are
$N_p\in\bN$ and $D_p>0$ such that
\begin{equation}\label{last}
\left|\la_{n,j}-\sum_{k=0}^p \frac{d_k\left(\frac{j\pi}{n+2}\right)}{(n+2)^k}\right| \le \frac{D_p}{(n+2)^{p+1}}
\end{equation}
for all $n\ge N_p$ and all $j$ in $\{1,\ldots,n\}$. For $p=2m$, inequality~\eqref{last} does not hold
for all sufficiently large $n$ and all $1 \le j \le n$, but it holds for
for all sufficiently large $n$ and all $j$ not too close to $1$, say, for $(\log(n+2))^2 \le j \le n$.
\end{conj}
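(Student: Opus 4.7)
The plan is to follow the blueprint of the $m=2$ case and to treat each of its three pillars in the general-$m$ setting. The hardest step, which I expect to be the main technical obstacle, is to derive an exact equation
\[
x = u_{n,j} + \frac{\eta_{n,j}(x)}{n+2} + \rho_{n,j}(x)
\]
for the inverse images $\ph_{n,j} = g_m^{-1}(\la_{n,j})$ on $(0,\pi)$, analogous to \eqref{eq:main_equation_pentadiagonal}. For $m=2$ this was obtained in \cite{BG2017pentadiagonal} via Elouafi's determinantal formulas specialized to the pentadiagonal case. For $m \ge 3$ the matrix $T_n(g_m)$ is $(2m+1)$-banded, its characteristic polynomial factors through $m$ hyperbolic and $m$ trigonometric ingredients, and one expects $\eta_{n,j}$ to be a combination of $m$ arctangent terms coupled through the residue of $j$ modulo $m$, while the remainder $\rho_{n,j}(x)$ should decay exponentially in $j$ for $x$ bounded away from $0$. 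The main work will be to extend Elouafi's approach to arbitrary $m$ and to prove the suitable analog of Lemma \ref{lem:rho_upper_estimate}.

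Assuming this step is carried out, the inner eigenvalues are immediately handled by Proposition \ref{prop:regular_asymptotics_from_main_equation}: for every $p$ and every $j \ge c_p \log(n+2)$, the remainder $\rho_{n,j}$ is negligible, and one obtains infinitely smooth coefficients $d_0, \ldots, d_p$ making \eqref{last} hold. Separately, the argument of Proposition \ref{prop:first_eigenvalues} extends essentially verbatim and yields, for each fixed $j$,
\[
\la_{n,j} = \left(\frac{\al_j^{(m)}}{n+2}\right)^{\! 2m} + O_j\!\left(\frac{1}{(n+2)^{2m+2}}\right),
\]
where the constants $\al_j^{(m)}$ are the positive solutions of a transcendental equation obtained in the $x \to 0$ limit of the main equation. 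Just as for $m=2$, one expects these $\al_j^{(m)}$ to be close to, but strictly different from, $j\pi + \eta(0)$.

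The final step is a binomial collapse near the origin. Since $g_m$ has a zero of order $2m$ at $0$, formulas \eqref{eq:c_formulas} and \eqref{eq:d_formulas} give $d_k(x) = O(x^{2m-k})$ as $x \to 0$ for $k \le 2m$, and a generalization of Lemma \ref{lem:false_rhs} shows that $\sum_{k=0}^{2m-1} d_k(u_{n,j})/(n+2)^k$ collapses to $((j\pi + \eta(0))^{2m} - \eta(0)^{2m})/(n+2)^{2m}$ modulo $O(j^{2m}/(n+2)^{2m+1})$, while the sum up to $k = 2m$ collapses to $((j\pi + \eta(0))/(n+2))^{2m}$ modulo the same error. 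For $p \le 2m-1$ I then follow the proof of Proposition \ref{prop:regular_asymptotics_of_the_third_order}: in the range $j < c\log(n+2)$ the main equation gives $\ph_{n,j} = (j\pi + \eta(0))/(n+2) + O(j/(n+2)^2) + O(e^{-2j}/(n+2))$, and expanding $\la_{n,j} = g_m(\ph_{n,j}) = \ph_{n,j}^{2m} + O(\ph_{n,j}^{2m+2})$ via the multinomial theorem and comparing term by term with the collapse formula produces absolute error $O(1/(n+2)^{p+1})$, since $\ph_{n,j}^{2m+2} = O((\log(n+2))^{2m+2}/(n+2)^{2m+2}) = o(1/(n+2)^{2m})$. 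The conjecture's threshold $j \ge (\log(n+2))^2$ is then a conservative choice, sufficient in particular to make $\rho_{n,j}$ a $o(1/(n+2)^{2m+1})$ quantity for all large $n$, and hence to handle $p = 2m$ in that range via Proposition \ref{prop:regular_asymptotics_from_main_equation}. Finally, to rule out \eqref{last} for $p = 2m$ uniformly down to $j = 1$, observe that the collapse formula would force $\la_{n,1} \sim (\pi + \eta(0))^{2m}/(n+2)^{2m}$, whereas the Parter--Widom-type asymptotics gives $\la_{n,1} \sim (\al_1^{(m)})^{2m}/(n+2)^{2m}$ with a different constant, exactly mirroring the contradiction produced in Proposition \ref{prop:no_regular_with_n_plus_two}.
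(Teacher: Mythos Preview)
The statement you are addressing is presented in the paper as a \emph{Conjecture}; the paper offers no proof and explicitly leaves it open. So there is no paper proof to compare your proposal against.

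What you have written is a plausible research programme that mirrors the $m=2$ argument step by step: derive a main equation via Elouafi-type determinantal identities, extract the regular expansion for $j$ bounded away from $1$ via Proposition~\ref{prop:regular_asymptotics_from_main_equation}, obtain Parter--Widom asymptotics for fixed $j$, and compare the two via a binomial collapse as in Lemma~\ref{lem:false_rhs} and Proposition~\ref{prop:regular_asymptotics_of_the_third_order}. You yourself correctly identify the central gap: producing the exact equation analogous to \eqref{eq:main_pentadiagonal} for general $m$, with a remainder $\rho_{n,j}$ that decays exponentially in $j$, is the whole difficulty, and you do not carry it out. For $m=2$ this rested on pentadiagonal-specific identities from \cite{BG2017pentadiagonal}; for $m\ge 3$ the characteristic polynomial involves $m$ coupled roots and no such equation is currently available in the literature, so ``assuming this step is carried out'' is assuming the conjecture's hard part.

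Two smaller points would also need work even granting the main equation. First, the final contradiction requires $\al_1^{(m)}\ne \pi+\eta(0)$ (or, following Remark~\ref{rem:another_end_of_the_proof}, $\al_2^{(m)}-\al_1^{(m)}\ne\pi$); this must be checked from whatever transcendental equation ends up defining the $\al_j^{(m)}$, and is not automatic. Second, your collapse claim $d_k(x)=O(x^{2m-k})$ as $x\to 0$ depends on the boundedness of the $c_k$ near $0$, which in turn requires the smoothness of the as-yet-unknown function $\eta$; for $m=2$ this was verified from the explicit formula \eqref{eq:eta_pentadiagonal}. In short, the strategy is the natural one suggested by the paper itself, but what you have is an outline of the expected proof, not a proof.
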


\bigskip\noindent
Mauricio~Barrera\\
CINVESTAV\\
Departamento de Matem\'aticas\\
Apartado Postal 07360\\
Ciudad de M\'exico\\
Mexico\\
\texttt{mabarrera@math.cinvestav.mx}

\bigskip\noindent
{Albrecht~B\"{o}ttcher\\
Technische Universit\"{a}t Chemnitz\\
Fakult\"{a}t f\"{u}r Mathematik\\
09107 Chemnitz\\
Germany\\
\texttt{aboettch@mathematik.tu-chemnitz.de}

\bigskip\noindent
Sergei~M.~Grudsky\\
CINVESTAV\\
Departamento de Matem\'aticas\\
Apartado Postal 07360\\
Ciudad de M\'exico\\
Mexico\\
\texttt{grudsky@math.cinvestav.mx}

\bigskip\noindent
Egor~A.~Maximenko\\
Instituto Polit\'ecnico Nacional\\
Escuela Superior de F\'isica y Matem\'aticas\\
Apartado Postal 07730\\
Ciudad de M\'exico\\
Mexico\\
\texttt{emaximenko@ipn.mx}


\begin{thebibliography}{10}

\bibitem{BG2017pentadiagonal}
M. Barrera and S.M. Grudsky:
{\em Asymptotics of eigenvalues for pentadiagonal symmetric Toeplitz matrices.}
Operator Theory: Adv. and Appl. 259, 179--212 (2017).
\doi{10.1007/978-3-319-49182-0\_7}

\bibitem{BBGM2015simpleloop}
J.M. Bogoya, A. B\"{o}ttcher, S.M. Grudsky, and E.A. Maximenko:
{\em Eigenvalues of Hermitian Toeplitz matrices with smooth simple-loop symbols.}
J. Math. Analysis Appl. 422, 1308--1334 (2015).
\doi{10.1016/j.jmaa.2014.09.057}

\bibitem{BBGM2015maximum}
J.M. Bogoya, A. B\"{o}ttcher, S.M. Grudsky, and E.A. Maximenko:
{\em Maximum norm versions of the Szeg\H{o} and Avram--Parter theorems for Toeplitz matrices.}
J. Approx. Theory 196, 79--100 (2015).
\doi{10.1016/j.jat.2015.03.003}

\bibitem{BBM2016convergence}
J.M. Bogoya, A. B\"{o}ttcher, E.A. Maximenko:
{\em From convergence in distribution to uniform convergence.}
Bolet\'{i}n de la Sociedad Matem\'{a}tica Mexicana 22:2, 695--710 (2016).
\doi{10.1007/s40590-016-0105-y}

\bibitem{BGM2017relaxed}
J.M. Bogoya, S.M. Grudsky, and E.A. Maximenko:
{\em Eigenvalues of Hermitian Toeplitz matrices generated by simple-loop symbols with relaxed smoothness.}
Operator Theory: Adv. and Appl. 259, 179--212 (2017).
\doi{10.1007/978-3-319-49182-0\_11}

\bibitem{BGM2010inside}
A. B\"{o}ttcher, S.M. Grudsky, and E.A. Maksimenko:
{\em Inside the eigenvalues of certain Hermitian Toeplitz band matrices.}
J. Comput. Appl. Math. 233, 2245--2264 (2010).
\doi{10.1016/j.cam.2009.10.010}

\bibitem{BG2005book}
A. B\"{o}ttcher and S.M. Grudsky: {\em Spectral Properties of Banded Toeplitz Matrices.}
SIAM, Philadelphia (2005).
\doi{10.1137/1.9780898717853}

\bibitem{BoWi}
A. B\"ottcher and H. Widom:
{\em From Toeplitz eigenvalues through Green's  kernels to higher-order Wirtinger-Sobolev inequalities.}
Operator Theory:  Adv. and Appl. 171, 73--87 (2006).
\doi{10.1007/978-3-7643-7980-3\_4}

\bibitem{DIK2012}
P. Deift, A. Its, and I. Krasovsky:
{\em Eigenvalues of Toeplitz matrices in the bulk of the spectrum.}
Bull. Inst. Math. Acad. Sin. (N.S.) 7, 437--461 (2012).
\myurl{http://web.math.sinica.edu.tw/bulletin\_ns/20124/2012401.pdf}

\bibitem{EGS2017areknown}
S.-E. Ekstr\"{o}m, C. Garoni, and S. Serra-Capizzano:
{\em Are the eigenvalues of banded symmetric Toeplitz matrices known in almost closed form?}
Experimental Mathematics, 1--10 (2017).
\doi{10.1080/10586458.2017.1320241}

\bibitem{Elouafi2014}
M. Elouafi:
{\em On a relationship between Chebyshev polynomials and Toeplitz determinants.}
Appl. Math. Comput. 229, 27--33 (2014).
\doi{10.1016/j.amc.2013.12.029}

\bibitem{Parter1}
S.V. Parter: 
{\em Extreme eigenvalues of Toeplitz forms and applications to elliptic difference equations.}
Trans. Amer. Math. Soc. 99, 153--192 (1961).
\doi{10.2307/1993449}

\bibitem{Parter2}
S.V. Parter: 
{\em On the extreme eigenvalues of truncated Toeplitz matrices.}
Bull. Amer. Math. Soc. 67, 191--196 (1961).
\doi{10.1090/S0002-9904-1961-10563-6}

\bibitem{Trench1993interlacement}
W.F. Trench: 
{\em Interlacement of the even and odd spectra of real symmetric Toeplitz matrices.}
Linear Alg. Appl. 195, 59--68 (1993).
\doi{10.1016/0024-3795(93)90256-N}

\bibitem{Widom1}
H. Widom: 
{\em Extreme eigenvalues of translation kernels.}
Trans. Amer. Math. Soc. 88, 491--522 (1958).
\doi{10.1090/S0002-9947-1961-0138980-4}

\bibitem{Widom2}
H. Widom: 
{\em Extreme eigenvalues of $N$-dimensional convolution operators.}
Trans. Amer. Math. Soc. 106, 391--414 (1963).
\doi{10.2307/1993750}

\end{thebibliography}
\end{document}